\newcommand*\Laplacian{\mathop{}\!\mathbin\bigtriangleup}
\newtheorem{lemma}{Lemma}
\newtheorem{corollary}{Corollary}
\newcommand{\mR}{{\mathbb R}}
\newcommand{\cX}{{\mathcal X}}
\newcommand{\cY}{{\mathcal Y}}
\newcommand{\cD}{{\mathscr{D}}}
\newcommand{\cS}{{\mathcal{S}}}
\newcommand{\bbc}{\bm{c}}
\newcommand{\bbv}{\bm{v}}
\newcommand{\bbw}{\bm{w}}
\newcommand{\bbx}{{\bm{x}}}
\newcommand{\bby}{{\bm{y}}}
\newcommand{\bbz}{\bm{z}}
\newcommand{\sym}{^{\mathrm{sym}}}
\newcommand{\skewsym}{^{\mathrm{skew}}}
\newcommand{\ep}{_{\mathrm{ep}}}
\newcommand{\bbmu}{\bm{\mu}}
\newcommand{\bbP}{\bm{P}}
\newcommand{\bbK}{\bm{K}}
\newcommand{\bbR}{\bm{R}}
\newcommand{\bbA}{\bm{A}}
\newcommand{\bbB}{\bm{B}}
\newcommand{\bbC}{\bm{C}}
\newcommand{\bbI}{\bm{I}}
\newcommand{\bbL}{\bm{L}}
\newcommand{\bbF}{\bm{F}}
\newcommand{\bbG}{\bm{G}}
\newcommand{\bbQ}{\bm{Q}}
\newcommand{\tr}{\operatorname{tr}}
\newcommand{\argmin}{\operatorname{arg\:min}}
\newcommand{\arginf}{\operatorname{arg\:inf}}
\def\spacingset#1{\def\baselinestretch{#1}\small\normalsize}
\begin{document}

\title{Gradient Flows in Uncertainty Propagation\\ and Filtering of Linear Gaussian Systems}
\author{Abhishek Halder, and Tryphon T. Georgiou\\{\em University of California, Irvine}}
\maketitle

\begin{abstract}
The purpose of this work is mostly expository and aims to elucidate the Jordan-Kinderlehrer-Otto (JKO) scheme for uncertainty propagation, and a variant, the Laugesen-Mehta-Meyn-Raginsky (LMMR) scheme for filtering. We point out that these variational schemes can be understood as proximal operators in the space of density functions, realizing gradient flows. These schemes hold the promise of leading to efficient ways for solving the Fokker-Planck equation as well as the equations of non-linear filtering. Our aim in this paper is to develop in detail the underlying ideas in the setting of linear stochastic systems with Gaussian noise and recover known results.
\end{abstract}

\section{Introduction}
Consider the gradient flow
$
\frac{\mathrm{d}\bbx}{\mathrm{d}t}=-\nabla \psi(\bbx)
$
in $\mR^n$, where $\nabla$ is the gradient (w.r.t. the Euclidean metric) of a function $\psi(\bbx)$, and consider the discretization
\[
\bbx_k=\bbx_{k-1}-h \nabla \psi(\bbx_{k-1}), \mbox{ for }k\in\mathbb{N}.
\]
As is well known in finite-dimensional optimization,
\begin{align}
\bbx_{k} &= \underset{\bbx}{\argmin}\{ \frac12\|\bbx - (\bbx_{k-1}-h \nabla \psi(\bbx_{k-1}))\|^2\}\nonumber\\
%&=\underset{\bbx}{\argmin} \{\frac{1}{2}\|\bbx-\bbx_k\|^2+h\langle (\bbx-\bbx_k),\nabla \psi(\bbx_k)\rangle\}\nonumber\\
%&\hspace*{-.7cm}=\underset{\bbx}{\argmin}  \{\frac{1}{2h}\|\bbx-\bbx_k\|^2+\langle (\bbx-\bbx_k),\nabla \psi(\bbx_k)\rangle+\psi(\bbx_k)\}\nonumber\\
&=\underset{\bbx}{\argmin} \frac{1}{2}\|\bbx-\bbx_{k-1}\|^2+ h\psi(\bbx) +o(h). \label{eq:jko}
\end{align}
By recursively evaluating the proximal operator \cite{BauschkeCombettes2011, ParikhBoyd2013}
\begin{eqnarray*}
\bbx_{k}&=&{\rm prox}_{h\psi}^{\|\cdot\|}(\bbx_{k-1})\\&=&\underset{\bbx}{\argmin} \{\frac{1}{2}\|\bbx-\bbx_{k-1}\|^2+h\psi(\bbx)\},
\end{eqnarray*}
the solution, which depends on the choice of the step size $h$, satisfies $\bbx_k(h)\to \bbx(t=kh)$, as $h\to 0$.

The Jordan-Kinderlehrer-Otto (JKO) scheme, introduced in \cite{JKO1998}, is a similar recursion in the infinite-dimensional space of density functions with respect to the Wasserstein geometry \cite{VillaniBook2003}, namely,
\begin{align}
\varrho_{k}(\bbx,h)&=\underset{\varrho}{\argmin}\;  \frac12 W_2^2(\varrho,\varrho_{k-1}) + h\cS(\varrho), \;\; k\in\mathbb{N},
\label{eq:JKO}\end{align}
where $W_2(\cdot,\cdot)$ denotes the Wasserstein-2 distance between two (probability) density functions,
\begin{equation}\label{eq:entropy}
\cS(\varrho):=\int_{\mR^n}\varrho(\bbx)\log(\varrho(\bbx))\mathrm{d}\bbx
\end{equation}
is the negative differential entropy functional,
and $\mathrm{d}\bbx$ is the volume element. In other words, \eqref{eq:JKO} can be viewed as the proximal operation ${\rm prox}_{h\cS}^{W_2}(\varrho_{k-1})$.
The main result in \cite{JKO1998} was to show that the minimizer of \eqref{eq:JKO} approximates the solution $\rho(\bbx,t)$ of the heat equation \[
\frac{\partial\rho(\bbx,t)}{\partial t}= \Delta \rho(\bbx,t), \mbox{ with }\rho(\bbx,0)=\rho_0(\bbx),
\]
in the sense that
$
\varrho_{k}(\bbx,h)\to \rho(\bbx,t=kh),\mbox{ as }h\downarrow 0.
$
Thus, \eqref{eq:JKO} establishes the remarkable result that {\em the heat equation is the gradient descent flow of the (negative) entropy integral with respect to the Wasserstein metric}.

An analogous JKO-like scheme was introduced recently in Laugesen \emph{et al.} \cite{LaugesenMehta2015} for the measurement update-step in continuous-time filtering. More specifically, let us consider the general system of It\^{o} stochastic differential equations (SDE's)
\begin{subequations}
\begin{eqnarray}
\mathrm{d}\bm{x}(t) &=& -\nabla U(\bm{x})\:\mathrm{d}t + \sqrt{2\beta^{-1}}\:\mathrm{d}\bm{w}(t),
\label{jkoSDE}\\
\mathrm{d}\bbz(t) &=& \bbc(\bm{x}(t),t)\:\mathrm{d}t + \mathrm{d}\bbv(t), 
\label{ObservationSDE}	
\end{eqnarray}
\end{subequations}
where $\bbx\in\mathbb{R}^{n}, \bbz\in\mathbb{R}^{m}, \beta>0$, $U(\cdot)$ is a potential, the process and measurement noise processes $\bbw(t)$ and $\bbv(t)$ are Wiener and satisfy $\mathbb{E}\left[\mathrm{d}w_{i}\mathrm{d}w_{j}\right] = \bbQ_{ij}\mathrm{d}t \:\forall\:i,j=1,\hdots,n$ and $\mathbb{E}\left[\mathrm{d}v_{i}\mathrm{d}v_{j}\right] = \bbR_{ij}\mathrm{d}t \:\forall\:i,j=1,\hdots,m$, with $\bbQ,\bbR \succ \bm{0}$, respectively. Then $\bbx(t)$ and $\bbz(t)$ represent state and sensor measurements at time $t$. Further, as usual, $\bbv(t)$ is assumed to be independent of  $\bbw(t)$ and independent of the initial state $\bm{x}(0)$.
Given the history of noise corrupted sensor data up to time $t$, the filtering problem requires computing  
the posterior probability distribution that obeys the Kushner-Stratonovich stochastic PDE \cite{Stratonovich1960, Kushner1964,FKK1972}.

For the special case of trivial state dynamics, i.e., $\mathrm{d}\bbx=0$, and $\bbR$ the identity,
Laugesen \emph{et al.} \cite{LaugesenMehta2015} introduced
\begin{align}
\varrho_{k}^{+}(\bbx,h) =  \underset{\varrho\in\mathcal{D}_{2}}\arginf  \{ D_{\mathrm{KL}}\left(\varrho\|\varrho_{k}^{-}\right) + h\Phi(\varrho)\}, \: k\in\mathbb{N},
\label{KLvariational}	
\end{align}
with
\begin{align}\label{eq:Phi}
\Phi(\varrho) &:= \frac12\mathbb{E}_{\varrho}\{(\bby_{k} - \bbc(\bbx))^{\top}\bbR^{-1}(\bby_{k} - \bbc(\bbx))\},
\end{align}
where $\bby_{k}$ is the  noisy measurement in discrete-time
defined via 
$\bby_{k} := \frac{1}{h}\Delta \bbz_{k}$, $\Delta\bbz_{k}:=\bbz_{k} - \bbz_{k-1}$, and 
$\{\bbz_{k-1}\}_{k\in\mathbb{N}}$ the sequence of samples of $\bbz(t)$ at $\{t_{k-1}\}_{k\in\mathbb{N}}$ for
$t_{k-1} := (k-1)h$.
Laugesen \emph{et al.} \cite{LaugesenMehta2015} proved that
the {\em LMMR equation} (\ref{KLvariational}) approximates the solution of 
\begin{align}
\mathrm{d}\rho^{+}(\bbx(t),t) =& \left[ \left(\bbc(\bbx(t),t) - \mathbb{E}_{\rho^{+}}\{\bbc(\bbx(t),t)\}\right)^{\top} \bbR^{-1} \right.\nonumber\\
&\!\!\!\!\!\!\!\!\!\!\left.\left(\mathrm{d}\bbz(t) - \mathbb{E}_{\rho^{+}}\{\bbc(\bbx(t),t)\}\mathrm{d}t\right)\right]\: \rho^{+}(\bbx(t),t),
\label{KSpde}	
\end{align}
i.e., of the Kushner-Stratonovich PDE corresponding to $\mathrm{d}\bbx=0$,  in the sense that 
$\varrho_{k}^{+}(\bm{x},h)\rightharpoonup \rho^{+}(\bbx(t),t)$ over $t\in[(k-1)h, kh)$, as $h\downarrow 0$.  Thus, they showed that in this special case, {\em the Kushner-Stratonovich PDE is the gradient descent of functional $\Phi(\cdot)$ with respect to $D_{\rm KL}$, i.e., computed by ${\rm prox}_{h\Phi}^{D_{\rm KL}}(\varrho_k^-)$}.

The purpose of the present paper is to develop this circle of ideas, namely, that {\em both uncertainty propagation and filtering can be viewed as gradient flows} in the special case of linear stochastic systems with Gaussian noise. In fact, we consider the general case of
the linear stochastic system
\begin{eqnarray}
\mathrm{d}\bm{x}(t) = \bm{A}\bm{x}(t) \: \mathrm{d}t\: + \bm{B}\:\mathrm{d}\bm{w}(t),
\label{OUmultivariate}
\end{eqnarray}
where $\bm{w}(t)$ is a Wiener process as before, though possibly not of the same dimension as $\bbx$. We suppose that the uncertain initial condition $\bm{x}(0)$ has a known Gaussian PDF, the matrix $\bm{A}$ is Hurwitz, and that the diffusion matrix $\bm{B}$ is such that $(\bm{A},\bm{B})$ is a controllable pair. For this, we recover the well-known propagation equations (see for example \cite[Ch. 3.6]{AstromBook1970}) for the mean and covariance of the state $\bbx(t)$ out of the JKO-scheme via a \emph{two-step optimization}. The applicability of the JKO-scheme to \eqref{OUmultivariate} {\em is not immediately obvious} since the development in \cite{JKO1998} requires the state dynamics to be in the canonical form \eqref{jkoSDE} with the drift being a gradient and the diffusion coefficient being a positive scalar. We further show that this two-step optimization procedure that we introduce, can be used to derive the Kalman-Bucy filter from a generalized version of the LMMR equation (\ref{KLvariational}). We remark that variational schemes for estimator/observer design based on gradient flows can also be seen as regularized dynamic inversion in the spirit of \cite{YezziVerriest2007}.

%\blue{Original technical contributions of this paper include}

\subsection*{Notation}
Throughout we use bold-faced upper-case letters for matrices, and bold-faced lower case letters for vectors. The notation $\bm{I}$ stands for identity matrix of appropriate dimension, we use $\tr(\cdot)$ and $\det(\cdot)$ to respectively denote the trace and determinant of a matrix, and the symbols $\nabla$ and $\Laplacian$ denote the gradient and Laplacian operators, respectively. We denote the space of probability density functions (PDFs) on $\mR^n$ by $\cD := \{\rho: \rho \geq 0, \int_{\mR^n} \rho = 1\}$, by $\cD_2:=\{\rho\in\cD \mid \int_{\mR^n} \bm{x}^{\top}\bm{x}\:\rho(\bm{x})\mathrm{d}\bm{x} < \infty\}$ the space of PDFs with finite second moments, by $\cD_{\bm{\mu},\bm{P}}$ denote the space of PDFs which share the same mean vector $\bm{\mu}$ and same covariance matrix $\bbP:=\int_{\mR^n}(\bbx-\bm{\mu})(\bbx-\bm{\mu})^\top \rho(\bbx)\mathrm{d}\bbx$. Likewise, let $\cD_{\bm{\mu},\tau}$ denote the space of PDFs which have the same mean $\bm{\mu}$ and same trace of covariance $\tau:=\tr(\bbP)>0$. Clearly, $\cD_{\bm{\mu},\bm{P}} \subset \cD_{\bm{\mu},\tau} \subset \cD_{2} \subset \cD$. We use the symbol $\mathcal{N}\left(\bm{\mu},\bm{P}\right)$ to denote a multivariate Gaussian PDF with mean $\bm{\mu}$, and covariance $\bm{P}$. The notation $\bbx\sim\rho$ means that the random vector $\bbx$ has PDF $\rho$; and $\mathbb{E}\left\{\cdot\right\}$ denotes the expectation operator while, when the probability density is to be specified, $\mathbb{E}_\rho\left\{\cdot\right\}:=\int_{\mR^n}(\cdot)\rho(\bbx)\mathrm{d}\bbx$.

%%%%%%%%%%%%%%%%%%%%%%%%%%%%%%%%%%%%%%%%%%%%%%%%%%%%%%%%%%%%%%%%%%%%%%%%%%%%%%%

\section{JKO Scheme in General}

We now discuss in some detail the JKO scheme for the case of the diffusion process in \eqref{jkoSDE}, and the corresponding Fokker-Planck equation \cite{RiskenBook1989}
\begin{equation}\label{eq:FP}
\frac{\partial\rho}{\partial t} = \nabla\cdot\left(\nabla U(\bm{x})\rho\right) + \beta^{-1}\Laplacian\rho, \quad \rho(\bm{x},0)=\rho_0(\bm{x}).
\end{equation}
To this end we first introduce the Wasserstein metric, the free energy, and the Kullback-Leibler divergence.

The \textbf{Wasserstein-2 distance} $W_{2}\left(\rho_1,\rho_2\right)$ between a pair of PDFs $\rho_1(\bm{x}),\rho_2(\bm{y})\in\cD$ (or, even between probability measures, in general), supported on $\mathcal{X}, \mathcal{Y} \subseteq \mathbb{R}^{n}$, is
\begin{align}
\!\!\!\!W_{2}\left(\rho_1,\rho_2\right) := \left(\underset{\mathrm{d}\sigma\in\Pi(\rho_1,\rho_2)}{\inf} \: \!\int_{\mathcal{X} \times \mathcal{Y}}\: \!\!\!\!\!\!\!\!\|\bm{x} - \bm{y}\|_{2}^{2} \:\mathrm{d}\sigma(\bm{x},\bm{y})\!\right)^{\frac{1}{2}}
\label{WassDefn}	
\end{align}
where $\Pi\left(\rho_1, \rho_2\right)$ is a probability measure on the product space $\cX\times \cY$ having finite second moments and marginals $\rho_1,\rho_2$, respectively.
It is well known that $W_{2} : \cD \times \cD \mapsto [0,\infty)$ is a metric \cite[p.\ 208]{VillaniBook2003}. Further, its square $W_2^2(\rho_1,\rho_2)$ represents the smallest amount of ``work'' needed to ``morph'' $\rho_1$ into $\rho_2$ \cite{French}. The infimum is achieved over a space of measures, and under mild assumptions, the minimizing $\mathrm{d}\sigma$ has support on the graph of the optimal ``transportation map'' $T:\cX\mapsto\cY$ that pushes $\rho_1$ to $\rho_2$. Alternatively, one may view the optimization problem in  \eqref{WassDefn} as seeking the joint distribution of two random vectors $\bbx$ and $\bby$, distributed according to $\rho_1$ and $\rho_2$ respectively, that minimizes the variance $\mathbb{E}\left\{\|\bbx-\bby\|_{2}^2\right\}$.

Another important notion of distance that enters into our discussion, which however is not a metric, is the {\bf Kullback-Leibler divergence} (also known as {\bf relative entropy}) between PDFs or positive measures in general. This is given by
$
D_{\mathrm{KL}}\left(\mathrm{d}\rho_1\|\mathrm{d}\rho_2\right):=\int (\frac{\mathrm{d}\rho_1}{\mathrm{d}\rho_2})\log(\frac{\mathrm{d}\rho_1}{\mathrm{d}\rho_2})\mathrm{d}\rho_2
$
where $\frac{\mathrm{d}\rho_1}{\mathrm{d}\rho_2}$ denotes the Radon-Nikodym derivative. When\footnote{Here we use a slight abuse of notation in that we denote both, the measure and the density with the same symbol.}$\mathrm{d}\rho_i=\rho_i(\bbx)\mathrm{d}\bbx$, $i\in\{1,2\}$, are absolutely continuous with respect to the Lebesgue measure, then \[D_{\mathrm{KL}}\left(\mathrm{d}\rho_1\|\mathrm{d}\rho_2\right)=\int_{\mathbb{R}^{n}} \rho_1(\bm{x})\log\frac{\rho_1(\bm{x})}{\rho_2(\bm{x})}\mathrm{d}\bbx.\]

Gradient flow requires an {\bf energy functional}, which we denote by
$\mathcal{E}\left(\rho\right):=\int U(\bm{x})\rho(\bm{x})\mathrm{d}\bbx$, where $U(\cdot)$ is the potential energy. Then, a stochastically driven gradient flow is modeled by the It\^{o} SDE
\eqref{jkoSDE}	
and the Fokker-Planck equation
\eqref{eq:FP}
for the corresponding PDF as before. The stationary solution of \eqref{eq:FP} is the Gibbs distribution $\rho_\infty(\bm{x})=\frac{1}{Z}e^{-\beta U(\bm{x})}$, where the normalization constant $Z:=\int_{\mathbb{R}^{n}} e^{-\beta U(\bbx)}\mathrm{d}\bbx$ is known as the \textbf{partition function}. The distance to equilibrium which, in a way, quantifies the amount of work that the system can deliver, is captured by the so-called \textbf{free energy functional} $\mathcal{F}\left(\rho\right)$, defined as the sum of the energy functional $\mathcal{E}\left(\rho\right)$ and the negative differential entropy
$\mathcal{S}\left(\rho\right)$ given in \eqref{eq:entropy}, that is,
\begin{subequations}\label{FreeEnergyDefn}
\begin{align}
	\mathcal{F}\left(\rho\right) &:= \mathcal{E}\left(\rho\right) \: + \beta^{-1} \: \mathcal{S}\left(\rho\right)
	\label{FreeEnergyDefn_a}\\
	&\phantom{:}=\beta^{-1}D_{\mathrm{KL}}\left(\rho\| e^{-\beta U(\bm{x})}\right).\label{FreeEnergyDefn_b}
\end{align}
\end{subequations}

%Given $\rho\in\cD$, the \textbf{free energy functional} $\mathcal{F}\left(\rho\right)$ is defined as a sum of the energy functional $\mathcal{E}\left(\rho\right):=\int U(\bm{x})\rho(\bm{x})d\bm{x}$, and the scaled negative of the differential entropy $\mathcal{S}\left(\rho\right):=-(-\int\rho(\bm{x})\log\rho(\bm{x})d\bm{x})$ wherein the term inside the last parenthesis is the differential entropy. In other words, 
%
%where $U(\bm{x}):\mathbb{R}^{d}\mapsto [0,\infty)$ is the potential energy corresponding to state $\bm{x}\in\mathbb{R}^{d}$.
%
%\begin{remark}
%\label{FreeEnergywithKLremark}
%Recalling that for two given PDFs $p(\bm{x})$ and $q(\bm{x})$, the KL-divergence , from (\ref{FreeEnergyDefn}) it is then easy to verify that
%\begin{eqnarray}
%\mathcal{F}\left(\rho\right) = \beta^{-1}D_{\mathrm{KL}}\left(\rho \parallel e^{-\beta U(\bm{x})}\right).
%\label{FreeEnergyAsKLdivergence}	
%\end{eqnarray}	
%\end{remark}
%
%
%\begin{remark}
%\label{ChoiceofbetaUremark}
%The choice of the parameter $\beta$, and the potential energy $U(\bm{x})$ in (\ref{FreeEnergyDefn}) are governed by the drift and diffusion coefficients of the SDE for state $\bm{x}(t)$. In \cite{JKO1998}, 

For the case of \eqref{jkoSDE}, the JKO scheme becomes
\begin{eqnarray}
\varrho_{k}\left(\bm{x},h\right) = \underset{\varrho\in\cD_{2}}\arginf  \{ \displaystyle\frac{1}{2}W_{2}^{2}\left(\varrho, \varrho_{k-1}\right) \: + \: h \: \mathcal{F}\left(\varrho\right)\}, \: k\in\mathbb{N},
\label{JKO}	
\end{eqnarray}
for step-size $h>0$, and initialized by a given $\varrho_{0}$ (satisfying $\mathcal{F}(\varrho_{0}) < \infty$). For $U(\bbx)\equiv 0$, \eqref{JKO} reduces to \eqref{eq:JKO}.
Solving (\ref{JKO}) results in a sequence of PDFs $\{\varrho_{k}(\bm{x},h)\}_{k\in\mathbb{N}}$ in $\cD_{2}$. 
It can be shown following \cite{JKO1998} that
 $\varrho_{k}(\bm{x},h)\rightharpoonup \rho\left(\bm{x}(t),t\right)$ weakly in $L^{1}(\mathbb{R}^{n})$ for $t\in[(k-1)h, kh)$, $k\in\mathbb{N}$, as $h\downarrow 0$.

%The Fokker-Planck PDE corresponding to (\ref{jkoSDE}) reads $\frac{\partial\rho}{\partial t} = \nabla\cdot\left(\nabla U(\bm{x})\rho\right) + \beta^{-1}\Laplacian\rho$, whose stationary solution yields the steady-state PDF $\rho_{\infty}(\bm{x}) \propto e^{-\bet\input{JKO_AH_v0.3a.tex}
%a U(\bm{x})}$ up to a normalization constant, and consequently Definition \ref{FreeenergyDefnLabel} is natural in view of Remark \ref{FreeEnergywithKLremark}. In other words, if one can express the It\^{o} SDE governing the state dynamics in the form (\ref{jkoSDE}), then the $\beta$ and $U(\bm{x})$ to be used in (\ref{FreeEnergyDefn}) or (\ref{FreeEnergyAsKLdivergence}) can simply be ``read off" from the SDE. 

%In this paper, we will focus on a linear stochastic system given by (\ref{OUmultivariate}) and show how to write $\mathcal{F}(\varrho)$ for the same. We will use the resulting $\mathcal{F}(\varrho)$ to compute $\{\varrho_{k}(\bm{x},h)\}_{k\in\mathbb{N}}$ via the JKO scheme, and then establish the consistency of our solution with that of the Fokker-Planck solution $\rho(\bbx(t),t)$ by explicit calculations.   
%\end{remark}

%%%%%%%%%%%%%%%%%%%%%%%%%%%%%%%%%%%%%%%%%%%%%%%%%%%%%%%%%%%%%%%%%%%%%%%%%%%%%%%
%%%%%%%%%%%%%%%%%%%%%%%%%%%%%%%%%%%%%%%%%%%%%%%%%%%%%%%%%%%%%%%%%%%%%%%%%%%%%%%
%%%%%%%%%%%%%%%%%%%%%%%%%%%%%%%%%%%%%%%%%%%%%%%%%%%%%%%%%%%%%%%%%%%%%%%%%%%%%%%
\section{JKO Scheme for linear Gaussian systems}\label{UncPropSectionLabel}

We now develop and solve the JKO scheme for the linear Gaussian system in (\ref{OUmultivariate}) with $\rho_{0} = \mathcal{N}\left(\bm{\mu}_{0},\bm{P}_{0}\right)$ and $\bm{Q}\equiv \bm{I}$, without loss of generality. Therefore, we are concerned with the linear Fokker-Planck (Kolmogorov's forward) PDE
\begin{eqnarray}
\displaystyle\frac{\partial\rho}{\partial t} = - \nabla \cdot \left(\rho\bm{A}\bm{x}\right) \: + \: \displaystyle\frac{1}{2}\displaystyle\sum_{i,j=1}^{n}%\sum_{j=1}^{d}
\displaystyle\frac{\partial^{2}}{\partial x_{i}\partial x_{j}}\left(\rho\bm{B}\bm{Q}\bm{B}^{\top}\right)_{ij}.
\label{FokkerPlanckLinear}	
\end{eqnarray}
%where $\bbx\in\mathbb{R}^{n}$, and $\rho(\bm{x}(0), 0) = \rho_{0}(\bm{x})$. This governs the transient evolution of the state density $\rho(\bbx(t),t)$ for the linear stochastic system
%\begin{eqnarray}
%\mathrm{d}\bm{x}(t) = \bm{A}\bm{x}(t) \: \mathrm{d}t\: + \bm{B}\:\mathrm{d}\bm{w}(t), \qquad \bm{x}(0) \sim \rho_{0}(\bm{x}),
%\label{OUmultivariate}
%\end{eqnarray}
%where $\bm{w}(t) \in \mathbb{R}^{q}$ is a Wiener process satisfying $\mathbb{E}\left[\mathrm{d}w_{i}\mathrm{d}w_{j}\right] = \bm{Q}_{ij}\mathrm{d}t \:\forall\:i,j=1,\hdots,q$, with $\bm{Q} \succ \bm{0}$. We suppose that the uncertain initial condition $\bm{x}(0)$ has a known PDF $\rho_{0}(\bm{x})$, the matrix $\bm{A}$ is Hurwitz, and that the diffusion matrix $\bm{B}$ is such that $(\bm{A},\bm{B})$ is a controllable pair.

Under the stated assumptions, it is well-known that (\ref{FokkerPlanckLinear}) admits a steady-state, which is Gaussian with mean zero and covariance $\bm{P}_{\infty} \succ \bm{0}$ that uniquely solves the algebraic Lyapunov equation $\bm{A}\bm{P}_{\infty} + \bm{P}_{\infty} \bm{A}^{\top} + \bm{BQB^{\top}} = \bm{0}$. Also, starting from $\rho_{0}(\bm{x})=\mathcal{N}\left(\bm{\mu}_0,\bm{P}_0\right)$, the transient is $\rho(\bm{x}(t),t)=\mathcal{N}\left(\bm{\mu}(t),\bm{P}(t)\right)$ where the $\bbmu(t)$ and $\bbP(t)$ satisfy the following ordinary differential equations (ODEs) \cite[Ch. 3.6]{AstromBook1970}
\begin{subequations}\label{MeanCovODEs}
\begin{align}\label{MeanCovODEsa}
&\dot{\bm{\mu}}(t) = \bm{A}\bm{\mu}(t), \: \bm{\mu}(0) = \bm{\mu}_{0},\\\label{MeanCovODEsb}
&\dot{\bm{P}}(t) = 	\bm{A}\bm{P}(t) + \bm{P}(t) \bm{A}^{\top} + \bm{BQB^{\top}}, \: \bm{P}(0) = \bm{P}_{0}.
\end{align}
\end{subequations}

Below, we recover these equations using the JKO scheme. First, in Section \ref{SpecialCaseSubsectonLabel}, we explain how this is done when $\bbA$ is symmetric and $\bm{B}\equiv \sqrt{2\beta^{-1}}\bm{I}$, $\beta>0$, in which case, $\bbA \bbx=-\nabla U(\bbx)$ for a suitable potential.
The general case, in Section \ref{GenericCaseSubsectonLabel}, is more involved and requires to view the drift as the gradient of a time-varying potential.

%%%%%%%%%%%%%%%%%%%%%%%%%%%%%%%%%%%%%%%%%%%%%%%%%%%%%%%%%%%%%%%%%%%%%%%%%%%%%%%

%\subsection{Preparatory Lemmas}

%%%%%%%%%%%%%%%%%%%%%%%%%%%%%%%%%%%%%%%%%%%%%%%%%%%%%%%%%%%%%%%%%%%%%%%%%%%%%%%

\subsection{The case where $\bbA$ is symmetric and $\bm{B}\equiv \sqrt{2\beta^{-1}}\bm{I}$}\label{SpecialCaseSubsectonLabel}

Since $\bm{B}\equiv \sqrt{2\beta^{-1}}\bm{I}$, the pair $(\bm{A},\bm{B})$ is controllable. Further, since $\bm{A}$ is Hurwitz and symmetric, $\bm{\Gamma}:=-\bm{A} \succ 0$, and utilizing the potential
\[
U(\bm{x}):=\frac{1}{2}\bm{x}^{\top}\bm{\Gamma}\bm{x} \geq 0,
\]
we can cast (\ref{OUmultivariate}) in the canonical form (\ref{jkoSDE}). Then,
\[
\mathcal{E}(\varrho) := \mathbb{E}[U(\bm{x})] = \frac{1}{2}\left(\bm{\mu}^{\top}\bm{\Gamma}\bm{\mu} + \tr\left(\bm{\Gamma}\bm{P}\right)\right),
\]
where $\bm{P}$ is the covariance of $\bbx$. Notice that $\mathcal{E}(\cdot)$ depends on the PDF of $\bbx$ {\em only via its mean and covariance}.

To carry out the optimization (\ref{JKO}) over $\cD_{2}$, we adopt a \textbf{two-step strategy}. Our approach is motivated by the observation that the objective function in (\ref{JKO}) is a sum of two functionals. In the \textbf{first step}, we choose a {\em suitable} parameterized subset of $\cD_{2}$ in such a way that when we optimize the functionals $\frac{1}{2}W^{2}(\varrho,\varrho_{0})$ and $h\mathcal{F}(\varrho)$ {\em individually over this chosen subspace}, the arginfs (which are achieved) of the two individual optimization problems match.
Hence, the sum of the two has the same arginf over the chosen subspace. In the \textbf{second step}, we optimize over the subspace parameters.
Our choice for the parameterized set of densities is $\cD_{\bm{\mu},\bm{P}} \subset \cD_{2}$, i.e., the PDFs with given mean-covariance pair $(\bm{\mu},\bm{P})$; the choice of the optimal pair is to be decided in the second optimization step.

The development below requires several technical lemmas that are collected in the Appendix.

\subsubsection{Optimizing over $\cD_{\bm{\mu},\bm{P}}$}\label{FirstStepSpecialCase}
Given $\varrho_{0}\equiv\rho_{0}=\mathcal{N}(\bm{\mu}_{0},\bm{P}_{0})$, and a $\bm{\mu}$ and $\bm{P}\succ \bm{0}$, we first determine
\begin{eqnarray}
\varrho_{1} = \underset{\varrho\in\cD_{\bm{\mu},\bm{P}}}\arginf \{ \displaystyle\frac{1}{2}W_{2}^{2}\left(\varrho, \mathcal{N}(\bm{\mu}_{0},\bm{P}_{0})\right) \: + \: h \: \mathcal{F}\left(\varrho\right)\}.
\label{SubspaceOptimization}	
\end{eqnarray}
From Lemma \ref{WsqareProjLemma} we see that $\underset{\varrho\in\cD_{\bm{\mu},\bm{P}}}\arginf \displaystyle\frac{1}{2}W_{2}^{2}\left(\varrho, \mathcal{N}(\bm{\mu}_{0},\bm{P}_{0})\right)$ is achieved by $\varrho=\mathcal{N}(\bm{\mu},\bm{P})$ (uniquely). From Lemma \ref{infFreeEnergyLemma}, since $U(\bm{x}) = \frac{1}{2}\bm{x}^{\top}\bm{\Gamma}\bm{x}$, we also know that $\underset{\varrho\in\cD_{\bm{\mu},\bm{P}}}\arginf h\:\mathcal{F}\left(\varrho\right)$ is achieved by $\varrho=\mathcal{N}(\bm{\mu},\bm{P})$ (uniquely). Thus,  $\varrho_{1} = \mathcal{N}(\bm{\mu},\bm{P})$. The infimal value in (\ref{SubspaceOptimization}) is now the sum of the two infima,
{\small{\begin{align}
&\displaystyle\frac{1}{2}\left[\parallel\bm{\mu}-\bm{\mu}_{0}\parallel_{2}^{2} + \tr\left(\bm{P} + \bm{P}_{0} - 2 \left(\bm{P}_{0}^{\frac{1}{2}} \bm{P} \bm{P}_{0}^{\frac{1}{2}}\right)^{\frac{1}{2}}\right)\right] + \displaystyle\frac{h}{2\beta}\nonumber\\
&\left[-n - n\log(2\pi) -\log\det(\bm{P})+ \beta\bm{\mu}^{\top}\bm{\Gamma}\bm{\mu} + \beta\tr\left(\bm{\Gamma}\bm{P}\right)\right].
\label{InfValue}	
\end{align}}}

\subsubsection{Optimizing over $(\bm{\mu},\bm{P})$}
Equating the gradient of (\ref{InfValue}) w.r.t. $\bm{\mu}$ to zero, results $\bm{\mu} = \bm{\phi}(\bm{\mu_{0}}) := (\bm{I}+h\bm{\Gamma})^{-1}\bm{\mu}_{0}$. The recursion $\bm{\mu}_{k} = \bm{\phi}(\bm{\mu_{k-1}})$, up to first order in $h$, becomes
\begin{eqnarray}
\bm{\mu}_{k} = \left(\bm{I} \:-\: h\bm{\Gamma}\right)\bm{\mu}_{k-1} \: + \: O(h^{2}).
\label{SplCaseRecoverMu}	
\end{eqnarray}
We see that this recursion coincides with the solution of  (\ref{MeanCovODEsa}) in the ``small $h$'' limit. Specifically, 
 $\bm{\mu}(t)=e^{\bm{A}t}\bm{\mu}_{0} \Rightarrow \bm{\mu}_{k} := \bm{\mu}(t=kh) = \left(e^{\bm{A}h}\right)^{k}\bm{\mu}_{0} \Rightarrow \bm{\mu}_{k} = e^{\bm{A}h} \bm{\mu}_{k-1} = \left(\bm{I} + h\bm{A}\right)\bm{\mu}_{k-1} + O(h^{2})$, which is same as (\ref{SplCaseRecoverMu}) since $\bm{\Gamma} := -\bm{A}$. Thus, we have recovered (\ref{MeanCovODEsa}) using discrete time-stepping via JKO scheme in the small step-size limit.

Setting the gradient of (\ref{InfValue}) w.r.t. $\bm{P}$ to zero (using Lemma \ref{MatrixDerivativeofWassCrossTermLemma}), we obtain
\begin{eqnarray}
\bm{I} - \bm{P}_{0}^{\frac{1}{2}}\left(\bm{P}_{0}^{-\frac{1}{2}} \bm{P}^{-1} \bm{P}_{0}^{-\frac{1}{2}}\right)^{\frac{1}{2}}\bm{P}_{0}^{\frac{1}{2}} - \frac{h}{\beta}\bm{P}^{-1} + h\bm{\Gamma} = \bm{0}.
\label{partialPequalsZero}	
\end{eqnarray}
By pre and post multiplying both sides of (\ref{partialPequalsZero}) with $\bm{P}_{0}^{-\frac{1}{2}}$, and letting $\left(\bm{P}_{0}^{-\frac{1}{2}} \bm{P}^{-1} \bm{P}_{0}^{-\frac{1}{2}}\right)^{\frac{1}{2}}=:\bm{Z}$, we arrive at
\[
\bm{Z}^{2} + \frac{\beta}{h}\bm{Z} - \frac{\beta}{h}\bm{P}_{0}^{-\frac{1}{2}}(\bm{I}+h\bm{\Gamma})\bm{P}_{0}^{-\frac{1}{2}} = \bm{0},
\]
which admits the unique closed-form solution \cite[p. 304]{HighamKimSIAM2001}
\begin{eqnarray}
\bm{Z} = \frac{\beta}{2h}\left(-\bm{I} \: + \: \left(\bm{I} + 4\frac{h}{\beta}\bm{P}_{0}^{-\frac{1}{2}}\left(\bm{I}+h\bm{\Gamma}\right)\bm{P}_{0}^{-\frac{1}{2}}\right)^{\frac{1}{2}}\right).
\label{SolutionQuadraticMatrixEquation}	
\end{eqnarray}
Expanding (\ref{SolutionQuadraticMatrixEquation}), we obtain
{\small{\begin{flalign}
&\bm{Z} = \frac{\beta}{2h}\left[-\bm{I} + \bigg\{\bm{I} + \frac{1}{2}4\frac{h}{\beta}\bm{P}_{0}^{-\frac{1}{2}}\left(\bm{I}+h\bm{\Gamma}\right)\bm{P}_{0}^{-\frac{1}{2}} + \right.\nonumber\\
&\left.\displaystyle\frac{\frac{1}{2}\left(\frac{1}{2}-1\right)}{2!}\frac{16h^{2}}{\beta^{2}}\bm{P}_{0}^{-\frac{1}{2}}\left(\bm{I}+h\bm{\Gamma}\right)\bm{P}_{0}^{-1}\left(\bm{I}+h\bm{\Gamma}\right)\bm{P}_{0}^{-\frac{1}{2}} + O(h^{3})\bigg\}\right]\nonumber\\
&= \bm{P}_{0}^{-\frac{1}{2}} \left(\bm{I} + h\bm{\Gamma} - \frac{h}{\beta}\bm{P}_{0}^{-1}\right) \bm{P}_{0}^{-\frac{1}{2}} \: + \: O(h^{2}).
\label{MatrixSeriesExpansion}	
\end{flalign}}}
\hspace*{-0.05in}Substituting $\bm{Z} = \left(\bm{P}_{0}^{-\frac{1}{2}} \bm{P}^{-1} \bm{P}_{0}^{-\frac{1}{2}}\right)^{\frac{1}{2}}$ back into (\ref{MatrixSeriesExpansion}), squaring, and rearranging, we get that
{\footnotesize\begin{align*}
&\bm{P} = \!\left(\bm{I}+h\left(\bm{\Gamma}-\frac{1}{\beta}\bm{P}_{0}^{-1}\right)\right)^{-1}\!\!\!\!\bm{P}_{0}\left(\bm{I}+h\left(\bm{\Gamma}-\frac{1}{\beta}\bm{P}_{0}^{-1}\right)\right)^{-1}\!\!\!+\! O(h^{2})\\
&= \left(\bm{I}-h\left(\bm{\Gamma}-\frac{1}{\beta}\bm{P}_{0}^{-1}\right)\right)\bm{P}_{0} \left(\bm{I}-h\left(\bm{\Gamma}-\frac{1}{\beta}\bm{P}_{0}^{-1}\right)\right) \: + \: O(h^{2})\\
& = \bm{\Psi}(\bm{P}_{0})\allowbreak \: + \: O(h^{2}),
\end{align*}}
\hspace*{-.05in}where $\bm{\Psi}(\bm{P}_{0}):=\bm{P}_{0} \: + \: h\left(-\bm{\Gamma}\bm{P}_{0} - \bm{P}_{0}\bm{\Gamma} + \allowbreak 2\allowbreak\beta^{-1}\allowbreak\bm{I}\allowbreak\right)$.  Set the matrix-valued recursion $\bm{P}_{k}=\bm{\Psi}\left(\bm{P}_{k-1}\right)$, where
{\small{\begin{eqnarray}
\bm{\Psi}\left(\bm{P}_{k-1}\right) := \bm{P}_{k-1} + h\left(-\bm{\Gamma}\bm{P}_{0} - \bm{P}_{0}\bm{\Gamma} + 2\beta^{-1}\bm{I}\right) + O(h^{2}).
\label{CovarianceRecursion}
\end{eqnarray}}}
\hspace*{-.05in}To show that (\ref{CovarianceRecursion}) indeed recovers \eqref{MeanCovODEsb}, first notice that substituting $\bm{A} = \bm{A}^{\top} = -\bm{\Gamma}$ and $\bm{B}=\sqrt{2\beta^{-1}}\bm{I}$ in (\ref{MeanCovODEsb}) results the Lyapunov differential equation
\[
\dot{\bm{P}}(t)=-\bm{\Gamma}\bm{P}(t)-\bm{P}(t)\bm{\Gamma}+2\beta^{-1}\bm{I}
\]
subject to $\bm{P}(0)=\bm{P}_{0}$, which can be solved via the method of integrating factor as
\begin{eqnarray}
\bm{P}(t) = \frac{1}{\beta}\bm{\Gamma}^{-1}\left(\bm{I} - e^{-2\bm{\Gamma}t}\right) \: + \: e^{-\bm{\Gamma}t}\bm{P}_{0}e^{-\bm{\Gamma}t}.
\label{SpecialCaseLyapODEsoln}	
\end{eqnarray}
Thus, for $t=kh$, (\ref{SpecialCaseLyapODEsoln}) gives
\begin{align*}
\bm{P}_{k}&:=\bm{P}(kh)=\beta^{-1}\bm{\Gamma}^{-1}\left(\bm{I}-e^{-2\bm{\Gamma}kh}\right) + e^{-\bm{\Gamma}kh}\bm{P}_{0}e^{-\bm{\Gamma}kh}\\
%& \hspace*{-15pt}= \beta^{-1}\bm{\Gamma}^{-1}\left(\bm{I}-\left(\bm{I}-2kh\bm{\Gamma}\right)\right) + (\bm{I}-kh\bm{\Gamma})\bm{P}_{0} (\bm{I}-kh\bm{\Gamma}) + O(h^{2})\\
&= 2\beta^{-1}kh\bm{I} + \left(\bm{P}_{0} - kh\bm{\Gamma}\bm{P}_{0} - kh\bm{P}_{0}\bm{\Gamma}\right) + O(h^{2}).
\end{align*}
Replacing $k$ with $k-1$ in the latter yields a similar expression for $\bm{P}_{k-1}$. Then, subtracting these expressions for $\bm{P}_{k-1}$ from $\bm{P}_{k}$ we obtain that 
\[
\bm{P}_{k}-\bm{P}_{k-1}=2\beta^{-1}h\bm{I} - h\bm{\Gamma}\bm{P}_{0} - h\bm{P}_{0}\bm{\Gamma} + O(h^{2}),
\]
 which is same as (\ref{CovarianceRecursion}) derived from JKO scheme. Thus, we have recovered the covariance evolution through Fokker-Planck dynamics using the time-stepping procedure via JKO scheme in the small step-size limit.

%%%%%%%%%%%%%%%%%%%%%%%%%%%%%%%%%%%%%%%%%%%%%%%%%%%%%%%%%%%%%%%%%%%%%%%%%%%%%%%

\subsection{The case of Hurwitz $\bbA$ and controllable $(\bbA,\bbB)$}\label{GenericCaseSubsectonLabel}
We scale $\bbB$ into $\sqrt{2}\bbB$ without loss of generality, and  take as initial PDF $\varrho_{0}\equiv\rho_{0} = \mathcal{N}(\bbmu_{0},\bbP_{0})$. Since we allow any Hurwitz (not necessarily symmetric) $\bbA$, and any $\bbB$ that makes $(\bbA,\sqrt{2}\bbB)$ a controllable pair, it is not apparent if and how one can express (\ref{OUmultivariate}) in the canonical form (\ref{jkoSDE}). The main impediment in doing so, is twofold: (1) how to define the potential energy $U(\bm{x})$, and (2) how to interpret and define the parameter $\beta$ in the generic case. In the following, we show that by two successive co-ordinate transformations, system (\ref{OUmultivariate}) can indeed be put in the form (\ref{jkoSDE}). Similar transformations have been mentioned in \cite[p. 1464]{LiberzonBrockettSIAM2000}, \cite{BrockettWillemsCDC1978} in a different context.

\subsubsection{Equipartition of energy coordinate transformation}\label{EquipartitionSubsubsection}
Consider the stationary covariance $\bbP_{\infty}$ associated with $(\bbA,\sqrt{2}\bbB)$ that satisfies 
\begin{equation}\label{eq:lyapunov}
\bbA\bbP_{\infty} + \bbP_{\infty}\bbA^{\top} + 2\bbB\bbB^{\top} = \bm{0}.
\end{equation}
For a system at a stationary distribution, we define the \textbf{thermodynamic temperature} $\theta$ as the average amount of ``energy'' per degree of freedom, that is,
\[
\theta := \frac{1}{n}\tr(\bbP_{\infty}),
\]
and, thereby, $\beta:=\theta^{-1}$ the \textbf{inverse temperature}. By
 pre and post multiplying \eqref{eq:lyapunov} with $\bbP_{\infty}^{-\frac{1}{2}}$, and rescaling by $\theta$ so as to preserve the temperature in the new coordinates, we get 
\begin{eqnarray}
\bbA\ep\theta\bbI + \theta\bbI\bbA\ep^{\top} + \sqrt{2\theta}\bbB\ep(\sqrt{2\theta}\bbB\ep)^{\top}=\bm{0},
\label{LyapunovInY} 
\end{eqnarray}
where $\bbA\ep := \bbP_{\infty}^{-\frac{1}{2}}\bbA\bbP_{\infty}^{\frac{1}{2}}$, $\bbB\ep := \bbP_{\infty}^{-\frac{1}{2}}\bbB$, while the stationary covariance $\theta\bbI$ reflects {\em equipartition of energy}. The equipartition of energy co-ordinate transformation $(\bbA,\sqrt{2}\bbB)\mapsto (\bbA\ep,\sqrt{2\theta}\bbB\ep)$, corresponds to the state-transformation $\bbx \mapsto \bbx\ep := \sqrt{\theta}\bbP_{\infty}^{-\frac{1}{2}} \bbx$, leading to
\begin{eqnarray}
\mathrm{d}\bbx\ep(t) = \bbA\ep\bbx\ep(t) \:\mathrm{d}t \: + \: \sqrt{2\theta}\bbB\ep\:\mathrm{d}\bbw(t).
\label{ySDE}	
\end{eqnarray}
\noindent This settles how $\beta$ is to be defined and interpreted in the context of JKO scheme  \eqref{FreeEnergyDefn} and (\ref{JKO}). On the other hand, $\bbA\ep$ being similar to $\bbA$, is guaranteed to be Hurwitz but not symmetric, unless $\bbA$ was symmetric to begin with. Thus, it remains for us to ``symmetrize" $\bbA\ep$ and define a suitable potential energy $U(\cdot)$ as needed in (\ref{JKO}). We do this next.

\subsubsection{Symmetrization transformation}\label{SymmetrizationSubsubsection}
We introduce the time-varying transformation
\[
\bbx\ep \mapsto \bbx_{\rm{sym}} := e^{-\bbA\ep\skewsym t} \bbx\ep
\]
where $\bbA\ep\skewsym := \frac{1}{2}(\bbA\ep - \bbA\ep^{\top})$. This results in
\[
(\bbA\ep,\sqrt{2\theta}\bbB\ep) \mapsto (\bbF(t),\sqrt{2\theta}\bbG(t)),
\]
with 
\[
\bbF(t) := e^{-\bbA\ep\skewsym t} \bbA\ep\sym e^{\bbA\ep\skewsym t},\mbox{ and }\bbG(t) := e^{-\bbA\ep\skewsym t} \bbB\ep,
\]
where, similarly, $\bbA\ep\sym := \frac{1}{2}(\bbA\ep + \bbA\ep^{\top})$.
Thus, $\bbx_{\rm{sym}}(t)$ satisfies
\begin{eqnarray}
\mathrm{d}\bbx_{\rm{sym}}(t) = \bbF(t)\bbx_{\rm{sym}}(t) \:\mathrm{d}t \: + \: \sqrt{2\theta}\bbG(t)\:\mathrm{d}\bbw(t).
\label{zSDE}	
\end{eqnarray}

Notice that $\bbF(t)$ is symmetric for all $t$. Furthermore, observe that the new coordinates $\bbx_{\rm{sym}}$ is simply obtained by a (time-varying) orthogonal transformation of the equipartition of energy coordinates $\bbx\ep$. Hence the stationary covariance of $\bbx_{\rm{sym}}$ is identical to that of $\bbx\ep$, which is $\theta\bbI$ (from Section \ref{EquipartitionSubsubsection}). What happens is that the covariance of $\bbx_{\rm{sym}}(t)$ tends to the same steady state value as $t\to\infty$ {\em in spite of the fact that \eqref{zSDE} has time varying coefficients}.
To see this in different way, we can rewrite (\ref{LyapunovInY}) as $\bbB\ep\bbB\ep^{\top} = - \bbA\ep\sym$, and deduce that
\begin{eqnarray}
\bbG(t)\bbG(t)^{\top} = e^{-\bbA\ep\skewsym t} \bbB\ep\bbB\ep^{\top} e^{\bbA\ep\skewsym t} = - \bbF(t),\nonumber\\
\Rightarrow \bbF(t)\theta\bbI + \theta\bbI\bbF(t) + \sqrt{2\theta}\bbG(t)(\sqrt{2\theta}\bbG(t))^{\top}=\bm{0}.
\label{DeducingLyapunovInZ}	
\end{eqnarray}
The symmetrization $\bbx\ep \mapsto \bbx_{\rm{sym}}$ leaves the stationary covariance $\theta\bbI$ invariant. This guarantees that the definition of temperature $\theta$ stays intact. The coordinate transformations described above are summarized in Table \ref{TransformationTable}.

%Given some matrix $\bbY$, defining the operator $\nabla_{\bbY\bbY^{\top}}$ (see for example, \cite[p. 1463]{LiberzonBrockettSIAM2000}, \cite{Brockett1997notes}) with respect to the metric $(\bbY\bbY^{\top})^{-1}$ on $\bm{z}\in\mathbb{R}^{d}$ as $\bbY\bbY^{\top} \displaystyle\frac{\partial}{\partial\bbz}$, and taking $U(\bbz):=\frac{1}{4}\bbz^{\top}(\theta\bbI)^{-1}\bbz$, we get
%\begin{eqnarray}
%- \nabla_{2\theta\bbG(t)\bbG(t)^{\top}}U(\bbz) = -2\theta\bbG(t)\bbG(t)^{\top}\:\frac{1}{2\theta}\bbz = \bbF(t)\bbz,
%\label{FzAsGradient}	
%\end{eqnarray}
%where the last equality follows from the algebraic Lyapunov equation in the $\bbz$ coordinates, given by $2\theta\bbF(t) + 2\theta\bbG(t)\bbG(t)^{\top} = \bm{0}$. This allows us to write the SDE (\ref{zSDE}) in the form (\ref{jkoSDE}), that is,
%\begin{eqnarray}
%\mathrm{d}\bbz(t) = - \nabla_{2\theta\bbG(t)\bbG(t)^{\top}}U(\bbz) \:\mathrm{d}t \: + \: \sqrt{2\theta}\bbG(t)\:\mathrm{d}\bbw(t).
%\label{zSDEinCanonicalForm}	
%\end{eqnarray}

\begin{table*}[!htb]
\centering
\begin{tabular}{| c | c | c | c |}
\hline
\backslashbox{Attribute $\downarrow$}{Coordinate $\rightarrow$} & Original & Equipartition of energy & Symmetrization\\ 
\hline\hline
& & &\\
State vector & $\bbx$ & $\bbx\ep$ & $\bbx_{\rm{sym}}$ \\
& & &\\
\hline
& & &\\
System matrices & $(\bbA,\sqrt{2}\bbB)$ & $(\bbA\ep,\sqrt{2\theta}\bbB\ep)$ & $(\bbF(t),\sqrt{2\theta}\bbG(t))$\\
& & &\\
\hline
& & &\\
Stationary covariance & $\bbP_{\infty}$ & $\theta\bbI$ & $\theta\bbI$\\
& & &\\
\hline
\end{tabular}
\caption{Summary of the coordinate transformations for Section \ref{GenericCaseSubsectonLabel}.}
\label{TransformationTable}
\end{table*}

\subsubsection{Recovery of the Fokker-Planck solution}\label{GenericLinFPKrecov}
We are now ready to apply the JKO scheme to the generic stochastic linear system $\mathrm{d}\bm{x}(t) = \bm{A}\bm{x}(t) \: \mathrm{d}t\: + \sqrt{2}\bm{B}\:\mathrm{d}\bm{w}(t)$, with initial PDF $\rho(\bbx(0),0) = \mathcal{N}(\bm{\mu}_{0},\bbP_{0})$. To this end, we carry out a computation akin to the two steps in Section \ref{SpecialCaseSubsectonLabel}, for the transformed SDE (\ref{zSDE}) in the symmetrized coordinate $\bbx_{\rm{sym}}$. From there on, we recover the Fokker-Planck solution in the original coordinate $\bbx$. 

%First, notice that $\bbx_{\rm{sym}} = e^{-\bbA\ep\skewsym t} \sqrt{\theta} \bbP_{\infty}^{-\frac{1}{2}} \bbx$ 
Since $\bbx \mapsto \bbx_{\rm{sym}}$ is a linear transformation, it follows that $\bbx_{\rm{sym}} \sim \mathcal{N}(\bbmu_{\rm{sym}}, \bbP_{\rm{sym}})$ whenever $\bbx\sim \mathcal{N}(\bbmu, \bbP)$. Thus, carrying out the first step of the optimization in $\bbx_{\rm{sym}}$ coordinate, we get an expression similar to (\ref{InfValue}) wherein $(\bbmu,\bbP)$ is to be replaced by $(\bbmu_{\rm{sym}},\bbP_{\rm{sym}})$, and $(\bbmu_{0},\bbP_{0})$ is to be replaced by $(\bbmu_{{\rm{sym}}_{0}},\bbP_{{\rm{sym}}_{0}})$. To carry out the second step of optimization, notice that $\bbA\ep\sym = -\bbB\ep\bbB\ep^{\top} \preceq 0$, and consequently $\bbF(t) = e^{-\bbA\ep\skewsym t} \bbA\ep\sym e^{\bbA\ep\skewsym t} \preceq 0$. Thus, considering the time-varying potential 
\[
U(\bbx_{\rm{sym}}) := -\frac{1}{2}\bbx_{\rm{sym}}^{\top}\bbF(t)\bbx_{\rm{sym}} \geq 0,
\]
and setting the partial derivative of the infimal value from first stage of the optimization w.r.t. $\bbmu_{\rm{sym}}$ to zero, results the recursion $\bbmu_{{\rm{sym}}_{k}} = (\bbI - h\bbF(kh))^{-1} \bbmu_{{\rm{sym}}_{k-1}}$. Recalling that $\bbx_{\rm{sym}} = e^{-\bbA\ep\skewsym t} \sqrt{\theta} \bbP_{\infty}^{-\frac{1}{2}} \bbx$, we arrive at a recursion in original coordinate:
\begin{align}
\bbmu_{k} = \bbP_{\infty}^{\frac{1}{2}} e^{\bbA\ep\skewsym kh} &\{(\bbI - h\bbF(kh))^{-1} e^{\bbA\ep\skewsym h}\} \nonumber\\
&e^{-\bbA\ep\skewsym kh} \bbP_{\infty}^{-\frac{1}{2}} \: \bbmu_{k-1}.
\label{JKOrecursionMeanOriginalCoord}	
\end{align}
By series expansion and collecting linear terms in $h$, one can verify the following: 
\begin{align*}
&(\bbI - h\bbF(kh))^{-1} = \bbI + h\bbA\ep\sym + O(h^{2}),\\
&(\bbI - h\bbF(kh))^{-1} e^{\bbA\ep\skewsym h} = \bbI + h\bbA\ep + O(h^{2}),\\
&e^{\bbA\ep\skewsym kh} (\bbI - h\bbF(kh))^{-1} e^{\bbA\ep\skewsym h} e^{-\bbA\ep\skewsym kh}\\
&\hspace*{1cm}= \bbI + h\bbA\ep + O(h^{2}).
\end{align*}
Hence (\ref{JKOrecursionMeanOriginalCoord}) yields
\begin{align}
\bbmu_{k} &= \left(\bbI + h\bbP_{\infty}^{\frac{1}{2}}\bbA\ep\bbP_{\infty}^{-\frac{1}{2}}\right) \bbmu_{k-1} \: + \: O(h^{2}) \nonumber\\
&= \left(\bbI + h\bbA\right) \bbmu_{k-1} \: + \: O(h^{2}),
\label{JKOmeanrecoveryGeneralcase}
\end{align}
where the last equality follows from $\bbA\ep:=\bbP_{\infty}^{-\frac{1}{2}}\bbA\bbP_{\infty}^{\frac{1}{2}}$. Since $\dot{\bbmu} = \bbA\bbmu$ and $e^{h\bbA} = \bbI + h\bbA + O(h^{2})$, in the small $h$ limit, equation (\ref{JKOmeanrecoveryGeneralcase}) thus recovers \eqref{MeanCovODEsa}, as in Section \ref{SpecialCaseSubsectonLabel}. A similar straightforward but tedious computation leads to the matrix recursion 
\begin{equation}\label{eq:unproven}
\bbP_{k} - \bbP_{k-1} = h(\bbA\bbP_{k-1} + \bbP_{k-1}\bbA^{\top} + 2\bbB\bbB^{\top}) + O(h^{2}),
\end{equation} which in the limit $h\downarrow 0$, is indeed a first-order approximation of the Lyapunov equation for the original system. We omit the details for brevity.

%%%%%%%%%%%%%%%%%%%%%%%%%%%%%%%%%%%%%%%%%%%%%%%%%%%%%%%%%%%%%%%%%%%%%%%%%%%
%%%%%%%%%%%%%%%%%%%%%%%%%%%%%%%%%%%%%%%%%%%%%%%%%%%%%%%%%%%%%%%%%%%%%%%%%%%

\section{JKO-like Schemes for Filtering}
\label{FilteringSectionLabel}
%In the continuous-time filtering problem, a process or state dynamics model like (\ref{jkoSDE}) is appended with an observation model of the form
%\begin{eqnarray}
%\mathrm{d}\bbz(t) = \bbc(\bm{x}(t),t)\:\mathrm{d}t + \mathrm{d}\bbv(t), \qquad \bbz\in\mathbb{R}^{m},
%\label{ObservationSDE}	
%\end{eqnarray}
%where $\bbz(t)$ denotes the sensor measurement at time $t$, and the measurement noise $\bbv(t)$ is a Wiener process satisfying $\mathbb{E}\left[\mathrm{d}v_{i}\mathrm{d}v_{j}\right] = \bbR_{ij}\mathrm{d}t \:\forall\:i,j=1,\hdots,m$, with $\bbR \succ \bm{0}$. As before, the measurement noise $\bbv(t)$ is assumed to be independent of the process noise $\bbw(t)$ in (\ref{jkoSDE}), and also independent of the initial state $\bm{x}(0)$. The objective is to estimate $\bbx(t) \in\mathbb{R}^{n}$, given the history of noise corrupted sensor data up to time $t$. Given (\ref{jkoSDE}) and (\ref{ObservationSDE}), solving the filtering problem amounts to computing the posterior PDF $\rho^{+}(\bbx(t),t)$ that obeys the Kushner-Stratonovich stochastic PDE \cite{Stratonovich1960, Kushner1964,FrostKaliath1971PartIII,FKK1972}
%\begin{align}
%&\mathrm{d}\rho^{+} = \left[\mathcal{L}_{\rm{FP}}\:\mathrm{d}t  + \left(\bbc(\bbx(t),t) - \mathbb{E}_{\rho^{+}}\{\bbc(\bbx(t),t)\}\right)^{\top} \bbR^{-1} \right.\nonumber\\
%&\left.\left(\mathrm{d}\bbz(t) - \mathbb{E}_{\rho^{+}}\{\bbc(\bbx(t),t)\}\mathrm{d}t\right)\right]\: \rho^{+},
%\label{KSpde}	
%\end{align}
%where $\mathcal{L}_{\mathrm{FP}}$ is the Fokker-Planck operator associated with (\ref{jkoSDE}), given by (\ref{eq:FP}).

In this section, we focus on the linear Gaussian filtering problem, with process model and measurement models
\begin{align*}
\mathrm{d}\bbx(t) &= \bbA\bbx(t)\mathrm{d}t + \sqrt{2}\bbB\mathrm{d}\bbw(t),\\
\mathrm{d}\bbz(t) &= \bbC\bm{x}(t)\:\mathrm{d}t + \mathrm{d}\bbv(t),
\end{align*}
where $\bbC\in\mathbb{R}^{m\times n}$, and $\rho_{0} = \mathcal{N}(\bbmu_{0},\bbP_{0})$. The conditional PDF $\rho^{+}(\bbx(t),t) = \mathcal{N}(\bbmu^{+}(t),\bbP^{+}(t))$, given measurements up to time $t$, is well-known and given by the \textbf{Kalman-Bucy filter} \cite{KalmanBucy1961}
\begin{subequations}\label{KalmanBucy}
\begin{align}
&\mathrm{d}\bbmu^{+}(t) = \bbA\bbmu^{+}(t)\mathrm{d}t + \bbK(t)\left(\mathrm{d}\bbz(t) - \bbC\bbmu^{+}(t)\mathrm{d}t\right), \label{eq:KB1}\\
& \! \!\dot{\bbP}^{+} \!(t)  \!=  \!\bbA\bbP^{+} \!(t) \!+  \!\! \bbP^{+} \!(t)\bbA^{\top}  \! \!+ \! 2\bbB\bbB^{\top}    \!\!\!-  \!\bbK \!(t)\bbR\bbK \!(t)^{\top} \label{eq:KB2}
\end{align}
\end{subequations}
that specifies a vector SDE and a matrix ODE, respectively, for 
 the conditional mean $\bbmu^{+}(t)$ and covariance $\bbP^{+}(t)$.
The initial conditions are $\bbmu^{+}(0) = \bbmu_{0}$, $\bbP^{+}(0) = \bbP_{0}$, and $\bbK(t) := \bbP^{+}(t)\bbC^{\top}\bbR^{-1}$ is the so-called Kalman gain.

%The resulting estimator is referred as , and is optimal in the sense that $\bbmu^{+}(t)$ is the minimum error variance as well as the maximum Bayesian \emph{a posteriori} (MAP) estimate of the state $\bbx(t)$. 
In the sequel, we demonstrate that by applying the two-step optimization strategy we used before in Section \ref{UncPropSectionLabel}, we can recover the Kalman-Bucy filter from LMMR-equation (\ref{KLvariational}) for the linear Gaussian case as the $h\downarrow 0$ limit.

\subsection{LMMR gradient descent scheme}\label{PrashantSubsection}

Once again we proceed with carrying out the following two optimization steps. First, we optimize \eqref{KLvariational} over $\cD_{\bm{\mu},\bm{P}}$, and then optimize the minimum value over the choice of parameters $({\bm{\mu},\bm{P}})$.

\subsubsection{Optimizing over $\cD_{\bm{\mu},\bm{P}}$}\label{FirstStepLaugesen}
Consider $\varrho_{k}^{-} = \mathcal{N}(\bbmu_{k}^{-},\bbP_{k}^{-})$ to be our prior for the state PDF at time $t=kh$. Observe that 
{\small{\begin{align}
\underset{\varrho\in\mathscr{D}_{\bbmu,\bbP}}{\inf}D_{\mathrm{KL}}\left(\varrho\|\mathcal{N}(\bbmu_{k}^{-},\bbP_{k}^{-})\right) &= \underset{\varrho\in\mathscr{D}_{\bbmu,\bbP}}{\inf} \left[\int_{\mathbb{R}^{n}} \varrho(\bbx)\log\varrho(\bbx)\mathrm{d}\bbx \right.\nonumber\\
&\left. - \mathbb{E}_{\varrho}\{\log\mathcal{N}(\bbmu_{k}^{-},\bbP_{k}^{-})\}\right],
\label{DKLalone}	
\end{align}}}

\noindent
and that
{\small{\begin{align*}
\mathbb{E}_{\varrho}\{\log\mathcal{N}(\bbmu_{k}^{-},\bbP_{k}^{-})\}
= -\frac{1}{2}\left[(\bbmu - \bbmu_{k}^{-})^{\top}\left(\bbP_{k}^{-}\right)^{-1}(\bbmu - \bbmu_{k}^{-})\right.\\\left.
+ \tr\left(\bbP(\bbP_{k}^{-})^{-1}\right)\right] - \frac{1}{2}\log\left((2\pi)^{n}\det(\bbP_{k}^{-})\right)\end{align*}}}

\noindent remains invariant for all $\varrho\in\mathscr{D}_{\bbmu,\bbP}$. Therefore, the arginf in (\ref{DKLalone}) is achieved by the Gaussian PDF $\mathcal{N}(\bbmu,\bbP)$ (i.e., the maximum entropy PDF with given mean-covariance), and the infimal value is precisely $D_{\rm{KL}}(\mathcal{N}(\bbmu,\bbP)\|\mathcal{N}(\bbmu_{k}^{-},\bbP_{k}^{-}))$. On the other hand, notice that

{\small{\begin{align}
&\underset{\varrho\in\mathscr{D}_{\bbmu,\bbP}}{\inf}\frac{1}{2}\:\mathbb{E}_{\varrho}\{(\bby_{k} - \bbC\bbx)^{\top} \bbR^{-1} (\bby_{k} - \bbC\bbx)\} = \frac{1}{2}\left[(\bby_{k} - \bbC\bbmu)^{\top} \right.\nonumber\\
&\left.\bbR^{-1}(\bby_{k} - \bbC\bbmu)+\tr\left(\bbC^{\top}\bbR^{-1}\bbC\bbP\right)\right] = \text{constant}
\label{SurpriseAlone}	
\end{align}}}

\noindent
as well over $\cD_{\bm{\mu},\bm{P}}$.
Hence
\begin{align*}
&\underset{\varrho\in\mathscr{D}_{\bbmu,\bbP}}\arginf \left[D_{\mathrm{KL}}\left(\varrho\|\mathcal{N}(\bbmu_{k}^{-},\bbP_{k}^{-})\right)\right.\\
& \left.\hspace*{1cm}+ \frac{h}{2}\:\mathbb{E}_{\varrho}\{(\bby_{k}  - \bbC\bbx)^{\top} \bbR^{-1} (\bby_{k} - \bbC\bbx)\}\right] = \mathcal{N}(\bbmu,\bbP),
\end{align*}
and the corresponding infimum value is
\begin{align}
&\frac{1}{2}\left[\tr\left((\bbP_{k}^{-})^{-1}\bbP\right) + (\bbmu_{k}^{-}-\bbmu)^{\top}(\bbP_{k}^{-})^{-1}(\bbmu_{k}^{-}-\bbmu) - n -\right.\nonumber\\
&\left.\log\det\left((\bbP_{k}^{-})^{-1}\bbP\right)\right] \: + \: \frac{h}{2}\left[(\bby_{k} - \bbC\bbmu)^{\top} \bbR^{-1} (\bby_{k} - \bbC\bbmu)\right.\nonumber\\
&\left.+\tr\left(\bbC^{\top}\bbR^{-1}\bbC\bbP\right)\right].
\label{InfValueDKLstep1}	
\end{align}

\subsubsection{Optimizing over $(\bm{\mu},\bm{P})$}
Equating the partial derivative of (\ref{InfValueDKLstep1}) w.r.t. $\bbmu$ to zero, and setting $\bbmu\equiv\bbmu_{k}^{+}$ in the resulting algebraic equation, we get
\begin{align}
&(\bbP_{k}^{-})^{-1}\left(\bbmu_{k}^{-} - \bbmu_{k}^{+}\right) + h\bbC^{\top}\bbR^{-1}\left(\bby_{k} - \bbC\bbmu_{k}^{+}\right) = \bm{0}, \nonumber\\
\Rightarrow &\bbmu_{k}^{+} = \bbmu_{k}^{-} + h\bbP_{k}^{-}\bbC^{\top}\bbR^{-1}\left(\bby_{k}-\bbC\bbmu_{k}^{+}\right).
\label{IntermediateKalmanBucyMeanSDE}	
\end{align}
On the other hand, equating the partial derivative of (\ref{InfValueDKLstep1}) w.r.t. $\bbP$ to zero, and then setting $\bbP\equiv\bbP_{k}^{+}$ in the resulting algebraic equation, we get
{\small{\begin{align}
&(\bbP_{k}^{+})^{-1} = (\bbP_{k}^{-})^{-1} + h\bbC^{\top}\bbR^{-1}\bbC \Rightarrow \bbP_{k}^{+} = \left(\bbI + h\bbP_{k}^{-}\bbC^{\top}\right.\nonumber\\
&\left.\bbR^{-1}\bbC\right)^{-1}\bbP_{k}^{-} = \bbP_{k}^{-} - h\bbP_{k}^{-}\bbC^{\top}\bbR^{-1}\bbC\bbP_{k}^{-} + O(h^{2}).
\label{IntermediateKalmanBucyCovODE}	
\end{align}}}
\hspace*{-6pt}
With $\Delta\bbz_{k} = \bby_{k}h$, as in Section I,
\[
\mathrm{d}\bbz(t) = \Delta\bbz_{k} + O(h^{2}),
\]
\[
\bbmu^{+}(t)\mathrm{d}t = \bbmu^{+}_{k} h + O(h^{2}),
\]
 and from (\ref{JKOmeanrecoveryGeneralcase}) that 
 \[
 \bbmu_{k}^{-} = (\bbI + h\bbA)\bbmu_{k-1}^{+} + O(h^{2})
.
\]
 These, together with (\ref{IntermediateKalmanBucyCovODE}), allow us to simplify (\ref{IntermediateKalmanBucyMeanSDE}) as
{\small{\begin{eqnarray*}
\bbmu_{k}^{+} \!\!-\! \bbmu_{k-1}^{+} = h\bbA\bbmu_{k-1}^{+} + \bbP_{k}^{+}\bbC^{\top}\bbR^{-1}\left(\Delta\bbz_{k} - h\bbC\bbmu_{k}^{+}\right) + O(h^{2}),
%\label{RecoverKalmanBucyMeanSDE}	
\end{eqnarray*}}}
\hspace*{-6pt}
which in the limit $h\downarrow 0$, leads to \eqref{eq:KB1}.

Substituting \eqref{eq:unproven} into (\ref{IntermediateKalmanBucyCovODE}) we arrive at
\begin{eqnarray}
\bbP_{k}^{+} - \bbP_{k-1}^{+} = h(\bbA\bbP_{k-1}^{+} + \bbP_{k-1}^{+}\bbA^{\top} + 2\bbB\bbB^{\top}) \nonumber\\
- h\bbP_{k-1}^{+}\bbC^{\top}\bbR^{-1}\bbC\bbP_{k-1}^{+} \:+\: O(h^{2}).
\label{RecoverKalmanBucyCovODE}		
\end{eqnarray}
In the limit $h\downarrow 0$, (\ref{RecoverKalmanBucyCovODE}) recovers  \eqref{eq:KB2}.

\subsection{Alternative JKO-like schemes for filtering}
The ideas in the LMMR-scheme suggest the possibility of alternative variational schemes to approximate stochastic estimators. Such a viewpoint has been put forth in  \cite{YezziVerriest2007}, promoting the notion of regularized dynamic inversion.
As an example, one may consider a {\em gradient descent with respect to the Wasserstein} distance $\frac{1}{2}W_{2}^{2}$, instead of KL-divergence $D_{\rm{KL}}$ in (\ref{KLvariational}).
%\subsection{A Gradient Descent Scheme with respect to $W_{2}^{2}$}\label{StochasticObserverSubsection}
%In this subsection, we replace the distance measure $D_{\rm{KL}}$ in (\ref{KLvariational}) with $\frac{1}{2}W_{2}^{2}$, that is, we consider the recursion 
In that case, the posterior may be constructed according to
\begin{eqnarray}
\varrho_{k}^{+}(\bbx,h) = \underset{\varrho\in\mathcal{D}_{2}}\arginf  \: \frac{1}{2}W_{2}^{2}\left(\varrho,\varrho_{k}^{-}\right) \: + \: h\Phi(\varrho), \quad k\in\mathbb{N},
\label{GnericWassersteinJKO}	
\end{eqnarray}
where the functional $\Phi(\cdot)$ is as in \eqref{eq:Phi}. The template of the two-step optimization again applies and, specializing to the linear Gaussian case, the solution of (\ref{GnericWassersteinJKO}) in the $h\downarrow 0$ limit, is $\mathcal{N}(\bbmu^{+}(t),\bbP^{+}(t))$, given by
\begin{subequations}\label{ObserverSDEODE}
\begin{align}
&\!\!\!\mathrm{d}\bbmu^{+}(t) = \bbA\bbmu^{+}(t)\mathrm{d}t + \bbL\left(\mathrm{d}\bbz(t) - \bbC\bbmu^{+}(t)\mathrm{d}t\right),\label{ObserverMean}\\
&\!\!\!\!\dot{\bbP}^{+}(t) \!\!= \!\!(\bbA - \bbL\bbC)\bbP^{+}(t) \!+\! \bbP^{+}(t)(\bbA - \bbL\bbC)^{\top} \!\!\!+\! 2\bbB\bbB^{\top}\!\!\!\!\!\label{ObserverCov}
\end{align}
\end{subequations}
where $\bbL := \bbC^{\top}\bbR^{-1}$, and $\bbmu^{+}(0) = \bbmu_{0}$, $\bbP^{+}(0) = \bbP_{0}$. This follows by noticing from Sections \ref{FirstStepSpecialCase} and \ref{FirstStepLaugesen} that
\[
\underset{\varrho\in\mathcal{D}_{\bbmu,\bbP}}\arginf  \: \left[\frac{1}{2}W_{2}^{2}\left(\varrho,\mathcal{N}(\bbmu_{k}^{-},\bbP_{k}^{-}\right)+h\Phi(\varrho)\right] = \mathcal{N}(\bbmu,\bbP),
\]
where the infimum value is
\begin{eqnarray}
&&\!\!\!\!\!\!\!\!\!\!\!\!\frac{1}{2}\left[\parallel\bm{\mu}\!-\!\bm{\mu}_{k}^{-}\parallel_{2}^{2} + \tr\left(\bm{P} + \bm{P}_{k}^{-}\! -\! 2 \left((\bm{P}_{k}^{-})^{\frac{1}{2}} \bm{P} (\bm{P}_{k}^{-})^{\frac{1}{2}}\right)^{\frac{1}{2}}\right)\right] \nonumber\\
&&\!\!\!\!\!\!\!\!\!\!\!\!\!+ \frac{h}{2}\left[(\bby_{k} - \bbC\bbmu)^{\top} \bbR^{-1} (\bby_{k} - \bbC\bbmu)\!+\!\tr\left(\bbC^{\top}\bbR^{-1}\bbC\bbP\right)\right].
\label{InfValueStocObserver}	
\end{eqnarray}
Equating the partial derivative of (\ref{InfValueStocObserver}) w.r.t. $\bbmu$ to zero, then setting $\bbmu\equiv\bbmu_{k}^{+}$, and using (\ref{JKOmeanrecoveryGeneralcase}), we find $(\bbmu_{k}^{+} - \bbmu_{k-1}^{+})$ equals
\begin{eqnarray}
h\bbA\bbmu_{k-1}^{+} + \bbC^{\top}\bbR^{-1}\left(\Delta\bbz_{k} - h\bbC\bbmu_{k}^{+}\right) + O(h^{2}),
\label{RecoveringMuStocObserver}	
\end{eqnarray}
which in the limit $h\downarrow 0$, results the SDE (\ref{ObserverMean}). Similarly, using Lemma \ref{MatrixDerivativeofWassCrossTermLemma}, we equate the partial derivative of (\ref{InfValueStocObserver}) w.r.t. $\bbP$ to zero, and then setting $\bbP\equiv\bbP_{k}^{+}$, we get
\begin{align*}
&(\bbP_{k}^{+})^{-1} = \left(\bbI + h\bbC^{\top}\bbR^{-1}\bbC\right)(\bbP_{k}^{-})^{-1}\left(\bbI + h\bbC^{\top}\bbR^{-1}\bbC\right)\Rightarrow\\ 
&\bbP_{k}^{+} = \bbP_{k}^{-} - h\left(\bbP_{k}^{-}\bbC^{\top}\bbR^{-1}\bbC + \bbC^{\top}\bbR^{-1}\bbC\bbP_{k}^{-}\right) + O(h^{2}),		
\end{align*}
which combined with the recursion $\bbP_{k}^{-} = \bbP_{k-1}^{+} + h(\bbA\bbP_{k-1}^{+} \allowbreak + \bbP_{k-1}^{+}\bbA^{\top} + 2\bbB\bbB^{\top}) + O(h^{2})$ from Section \ref{GenericLinFPKrecov}, yields
\begin{align}
\bbP_{k}^{+} = \bbP_{k-1}^{+} + &h\left[\left(\bbA - \bbC^{\top}\bbR^{-1}\bbC\right)\bbP_{k-1}^{+} + \bbP_{k-1}^{+}\left(\bbA - \right.\right.\nonumber\\ 
&\left.\left.\bbC^{\top}\bbR^{-1}\bbC\right)^{\top} + 2\bbB\bbB^{\top}\right] + O(h^{2}).
\label{RecoveringPStocObserver}	
\end{align}
In the limit $h\downarrow 0$, recursion (\ref{RecoveringPStocObserver}) gives Lyapunov ODE (\ref{ObserverCov}).

It is instructive to compare the SDE-ODE system (\ref{ObserverSDEODE}) with that in (\ref{KalmanBucy}). In the case of (\ref{ObserverSDEODE}), the estimator is of a Luenberger type with a static gain matrix $\bbL$ which is decoupled from the covariance, unlike (\ref{KalmanBucy}). The estimator (\ref{ObserverSDEODE}) is obviously not optimal in the minimum mean-square error sense. It is only presented here as a guideline to explore other variational schemes with desirable properties.

\section{Concluding remarks} 

Reformulating uncertainty propagation and the filtering equations as gradient flows \cite{AmbrosioBook2008} is 
potentially transformative \cite{JKO1998} \cite{LaugesenMehta2015}. The full power of this viewpoint is yet to be uncovered. Moreover, casting the iterative approximation steps in the language of proximal operators on the space of density functions may provide theoretical insights and computational benefits. A specific direction of future work would be developing proximal algorithms \cite{ParikhBoyd2013} to numerically solve the nonlinear filtering problem by recursively solving convex optimization problems, and to quantify computational performance of the same with respect to existing sequential Monte Carlo algorithms like the particle filter. The purpose of the present paper has been to highlight and elucidate the ideas in \cite{JKO1998} and \cite{LaugesenMehta2015} in the context of linear Gaussian systems. We hope that this study will help to motivate further exploration of this topic. 

%%%%%%%%%%%%%%%%%%%%%%%%%%%%%%%%%%%%%%%%%%%%%%%%%%%%%%%%%%%%%%%%%%%%%%%%%%%%%%%%%%%%%%%%%%%%%%%%%%%%%%%%%%%%%%%%%%%%%%%%%%%%%%%%%%%%%%%

\appendix
In this Appendix, we collect some lemmas that are used in Sections \ref{UncPropSectionLabel} and \ref{FilteringSectionLabel}. In addition, we will show in Corollary \ref{RecoverCarlenGangboThm3.1} below that applying Lemma \ref{TraceInequalityLemma} and \ref{WsqareProjLemma} together enables us to provide an alternative proof of a Theorem in \cite{CarlenGangbo2003}, which might be of independent interest.

\begin{lemma}\label{TraceInequalityLemma}
If $\bm{X}$ and $\bm{Y}$ are symmetric positive definite matrices, then $\tr\left(\bm{X}^{\frac{1}{2}}\bm{Y}\bm{X}^{\frac{1}{2}}\right)^{\frac{1}{2}} \leq \sqrt{\tr\left(\bm{X}\right)\:\tr\left(\bm{Y}\right)}$. 
\end{lemma}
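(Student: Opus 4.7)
The plan is to recognize the left-hand side as the trace (nuclear) norm of a specific matrix, and then apply Cauchy–Schwarz twice: once on each pair of vectors from an SVD, and once on the resulting sum. Concretely, set $\bbA := \bbX^{1/2}\bbY^{1/2}$. Since $\bbA\bbA^\top = \bbX^{1/2}\bbY\bbX^{1/2}$ and $\bbA^\top\bbA = \bbY^{1/2}\bbX\bbY^{1/2}$ share the same (nonzero) eigenvalues, which are the squares of the singular values $\sigma_i(\bbA)$, one has
\[
\tr\!\left(\bbX^{1/2}\bbY\bbX^{1/2}\right)^{1/2} \;=\; \sum_{i=1}^n \sigma_i(\bbA).
\]

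Next I would introduce the singular value decomposition $\bbA = \bbU\bm{\Sigma}\bbV^\top$ with orthonormal columns $u_1,\dots,u_n$ of $\bbU$ and $v_1,\dots,v_n$ of $\bbV$, so that $\sigma_i(\bbA) = u_i^\top \bbA v_i = (\bbX^{1/2}u_i)^\top(\bbY^{1/2}v_i)$. Applying the usual vector Cauchy–Schwarz inequality term by term yields
\[
\sigma_i(\bbA) \;\le\; \|\bbX^{1/2}u_i\|_2\,\|\bbY^{1/2}v_i\|_2.
\]
Summing over $i$ and invoking Cauchy–Schwarz a second time, now on the sum, gives
\[
\sum_{i=1}^n \sigma_i(\bbA) \;\le\; \sqrt{\sum_{i=1}^n \|\bbX^{1/2}u_i\|_2^2}\;\sqrt{\sum_{i=1}^n \|\bbY^{1/2}v_i\|_2^2}.
\]

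Finally, using orthonormality of $\{u_i\}$ and $\{v_i\}$, each inner sum collapses to a trace: $\sum_i \|\bbX^{1/2}u_i\|_2^2 = \sum_i u_i^\top \bbX u_i = \tr(\bbX)$, and analogously $\sum_i \|\bbY^{1/2}v_i\|_2^2 = \tr(\bbY)$. Combining with the opening identification yields the stated bound $\tr(\bbX^{1/2}\bbY\bbX^{1/2})^{1/2} \le \sqrt{\tr(\bbX)\tr(\bbY)}$. The main (minor) obstacle is really only the opening observation that the left-hand side equals $\sum_i \sigma_i(\bbX^{1/2}\bbY^{1/2})$; once this is in place the rest is a textbook double application of Cauchy–Schwarz. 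Equivalently, the same inequality follows instantly from Hölder's inequality for Schatten norms with conjugate exponents $p=q=2$, namely $\|\bbX^{1/2}\bbY^{1/2}\|_1 \le \|\bbX^{1/2}\|_F \|\bbY^{1/2}\|_F$, but the SVD-based argument above is self-contained and keeps the paper elementary.
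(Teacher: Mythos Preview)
Your argument is correct. The identification $\tr\bigl((\bbX^{1/2}\bbY\bbX^{1/2})^{1/2}\bigr)=\sum_i\sigma_i(\bbX^{1/2}\bbY^{1/2})$ is sound because the eigenvalues of $(\bbX^{1/2}\bbY\bbX^{1/2})^{1/2}$ are precisely the singular values of $\bbA:=\bbX^{1/2}\bbY^{1/2}$; the two applications of Cauchy--Schwarz and the collapse of the orthonormal sums to traces are all standard and valid. Your closing remark that this is just the Schatten-norm H\"older inequality $\|\bbA\|_1\le\|\bbX^{1/2}\|_F\|\bbY^{1/2}\|_F$ is also a correct one-line alternative.

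The paper's proof takes a genuinely different route: it invokes Uhlmann's variational characterization of the fidelity quantity, namely that for any $\bbG\succ\bm{0}$ one has $\tr\bigl((\bbX^{1/2}\bbY\bbX^{1/2})^{1/2}\bigr)\le\sqrt{\tr(\bbX\bbG)\,\tr(\bbY\bbG^{-1})}$, and then specializes $\bbG$ to collapse the right-hand side. What this buys is brevity at the cost of importing a nontrivial cited result; what your approach buys is self-containment, since it uses nothing beyond SVD and the vector Cauchy--Schwarz inequality. Either route is perfectly acceptable here; yours is arguably the more elementary of the two and avoids any dependence on the Uhlmann formula.
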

\begin{proof}
From Uhlmann's variational formula (see \cite{Uhlmann1976}, also Theorem 6.1 in \cite{PetzBook2008})	, given any $\bm{G}\succ \bm{0}$, we have
\begin{eqnarray}
\tr\left(\left(\bm{X}^{\frac{1}{2}}\bm{Y}\bm{X}^{\frac{1}{2}}\right)^{\frac{1}{2}}\right) \leq \sqrt{\tr\left(\bm{XG}\right)\:\tr\left(\bm{YG}^{-1}\right)},
\label{proofTraceInequality}	
\end{eqnarray}
where the equality in (\ref{proofTraceInequality}) is achieved for the specific choice $\bm{G}_{\mathrm{opt}} = \bm{Y}^{\frac{1}{2}}\left(\bm{X}^{\frac{1}{2}}\bm{Y}\bm{X}^{\frac{1}{2}}\right)^{-\frac{1}{2}}\bm{X}^{\frac{1}{2}}\bm{Y}^{\frac{1}{2}}\bm{X}^{-\frac{1}{2}}$. Specializing (\ref{proofTraceInequality}) for $\bm{G} = \bm{Y}$, and noting that $\tr\left(\bm{X}\bm{Y}\right) \leq \tr\left(\bm{X}\right)\tr\left(\bm{Y}\right)$, the statement follows. 
\end{proof}

\begin{lemma}\label{WsqareProjLemma}
Given a PDF $\varrho_{0}(\bm{x})\in\cD_{2}$ with mean $\bm{\mu}_{0}\in\mathbb{R}^{n}$, and $n\times n$ covariance matrix $\bm{P}_{0} \succ \bm{0}$. Then $\underset{\varrho\in\cD_{\bm{\mu},\bm{P}}}{\inf} \; W_{2}^{2}\left(\varrho,\varrho_{0}\right)$ equals
	\begin{eqnarray}
	 \parallel\bm{\mu}-\bm{\mu}_{0}\parallel_{2}^{2} \: + \: \tr\left(\bm{P} + \bm{P}_{0} - 2 \left(\bm{P}_{0}^{\frac{1}{2}} \bm{P} \bm{P}_{0}^{\frac{1}{2}}\right)^{\frac{1}{2}}\right),
	\label{infWsquare}	
	\end{eqnarray}
and is achieved by push-forward of $\varrho_{0}(\bm{x})$ via an affine transport map $\bm{M}\bm{x} + \bm{m}$, where $\bm{M} := \bm{P}^{\frac{1}{2}} \left(\bm{P}^{\frac{1}{2}}\bm{P}_{0}\bm{P}^{\frac{1}{2}}\right)^{-\frac{1}{2}} \bm{P}^{\frac{1}{2}}$, and $\bm{m}:=\bm{\mu}-\bm{\mu}_{0}$, that is, the $\mathrm{arginf}$ for (\ref{infWsquare}) is $\varrho(\bm{x}) = \sqrt{\frac{\det(\bm{P}_{0})}{\det(\bm{P})}}\:\varrho_{0}\left( \bm{P}^{-\frac{1}{2}} \left(\bm{P}^{\frac{1}{2}}\bm{P}_{0}\bm{P}^{\frac{1}{2}}\right)^{\frac{1}{2}} \bm{P}^{-\frac{1}{2}}\left(\bm{x} - \bm{\mu}\right) + \bm{\mu}_{0}\right)$. In particular, if $\varrho_{0} = \mathcal{N}\left(\bm{\mu}_{0},\bm{P}_{0}\right)$, then $\varrho = \mathcal{N}\left(\bm{\mu},\bm{P}\right)$.
\end{lemma}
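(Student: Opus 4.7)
The approach is to combine the Kantorovich formulation of $W_{2}^{2}$ with a Schur-complement argument that reduces the problem to a linear-algebraic optimization over cross-covariance matrices. First I would write
\[
W_{2}^{2}(\varrho,\varrho_{0}) = \underset{\sigma\in\Pi(\varrho,\varrho_{0})}{\inf}\mathbb{E}_{\sigma}\{\|\bm{x}-\bm{y}\|_{2}^{2}\}
\]
and expand $\|\bm{x}-\bm{y}\|_{2}^{2}=\|\bm{x}\|_{2}^{2}+\|\bm{y}\|_{2}^{2}-2\bm{x}^{\top}\bm{y}$. Because $\varrho\in\cD_{\bm{\mu},\bm{P}}$ fixes $\mathbb{E}\{\|\bm{x}\|_{2}^{2}\}=\|\bm{\mu}\|_{2}^{2}+\tr(\bm{P})$ and $\varrho_{0}$ fixes $\mathbb{E}\{\|\bm{y}\|_{2}^{2}\}=\|\bm{\mu}_{0}\|_{2}^{2}+\tr(\bm{P}_{0})$, the only quantity that varies with $\varrho$ and $\sigma$ is the cross-moment $\mathbb{E}\{\bm{x}^{\top}\bm{y}\}=\bm{\mu}^{\top}\bm{\mu}_{0}+\tr(\bm{C})$, where $\bm{C}:=\mathbb{E}\{(\bm{x}-\bm{\mu})(\bm{y}-\bm{\mu}_{0})^{\top}\}$ is the cross-covariance of the joint law. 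Minimizing $W_{2}^{2}$ jointly over $\varrho$ and $\sigma$ therefore reduces to \emph{maximizing} $\tr(\bm{C})$ over all admissible cross-covariance matrices.

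For the lower bound I would use the Schur-complement characterization: any admissible $\bm{C}$ arises from a joint law whose covariance block $\left(\begin{smallmatrix}\bm{P}&\bm{C}\\ \bm{C}^{\top}&\bm{P}_{0}\end{smallmatrix}\right)$ is positive semidefinite, forcing $\bm{C}\bm{P}_{0}^{-1}\bm{C}^{\top}\preceq\bm{P}$. Equivalently, $\bm{E}:=\bm{P}^{-1/2}\bm{C}\bm{P}_{0}^{-1/2}$ satisfies $\bm{E}\bm{E}^{\top}\preceq\bm{I}$, so every singular value of $\bm{E}$ is at most $1$. Rewriting $\tr(\bm{C})=\tr(\bm{P}_{0}^{1/2}\bm{P}^{1/2}\bm{E})$ and invoking von Neumann's trace inequality then yields
\[
\tr(\bm{C})\leq\sum_{i}\sigma_{i}(\bm{P}_{0}^{1/2}\bm{P}^{1/2})=\tr\bigl((\bm{P}_{0}^{1/2}\bm{P}\bm{P}_{0}^{1/2})^{1/2}\bigr),
\]
using that $\sigma_{i}(\bm{P}_{0}^{1/2}\bm{P}^{1/2})^{2}$ are the eigenvalues of $\bm{P}^{1/2}\bm{P}_{0}\bm{P}^{1/2}$, which coincide with those of $\bm{P}_{0}^{1/2}\bm{P}\bm{P}_{0}^{1/2}$. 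Substituting this bound back into the expansion of $\mathbb{E}_{\sigma}\{\|\bm{x}-\bm{y}\|_{2}^{2}\}$ produces exactly (\ref{infWsquare}) as a lower bound on the infimum.

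For achievability I would exhibit the deterministic coupling induced by the affine pushforward $T(\bm{y}):=\bm{M}(\bm{y}-\bm{\mu}_{0})+\bm{\mu}$ with the stated $\bm{M}$, and verify that it saturates the bound. A short cyclic-trace calculation shows $\bm{M}\bm{P}_{0}\bm{M}^{\top}=\bm{P}$ (so $T_{\#}\varrho_{0}\in\cD_{\bm{\mu},\bm{P}}$) and $\tr(\bm{M}\bm{P}_{0})=\tr((\bm{P}^{1/2}\bm{P}_{0}\bm{P}^{1/2})^{1/2})$, which matches the upper bound on $\tr(\bm{C})$. The claimed density formula then follows from the change-of-variables identity $(T_{\#}\varrho_{0})(\bm{x})=\varrho_{0}(T^{-1}(\bm{x}))/|\det\bm{M}|$ together with $\det\bm{M}=\sqrt{\det\bm{P}/\det\bm{P}_{0}}$; and when $\varrho_{0}=\mathcal{N}(\bm{\mu}_{0},\bm{P}_{0})$, the affine image of a Gaussian is Gaussian, so $T_{\#}\varrho_{0}=\mathcal{N}(\bm{\mu},\bm{P})$ is immediate. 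I expect the main obstacle to be algebraic bookkeeping around non-commuting matrix square roots---cleanly reconciling the two equivalent forms $(\bm{P}^{1/2}\bm{P}_{0}\bm{P}^{1/2})^{1/2}$ and $(\bm{P}_{0}^{1/2}\bm{P}\bm{P}_{0}^{1/2})^{1/2}$, and, if uniqueness of the $\arginf$ is desired, chasing the equality case in von Neumann's inequality to pin $\bm{E}$ down to the orthogonal factor in the polar decomposition of $\bm{P}_{0}^{1/2}\bm{P}^{1/2}$.
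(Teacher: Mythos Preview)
Your argument is correct and shares the paper's overall architecture: both reduce the problem to maximizing $\tr(\bm{C})$ over cross-covariances subject to the Schur-complement constraint coming from positive semidefiniteness of the joint covariance, and both verify achievability via the affine pushforward $\bm{M}\bm{x}+\bm{m}$. The difference is in how the bound $\tr(\bm{C})\le\tr\bigl((\bm{P}_{0}^{1/2}\bm{P}\bm{P}_{0}^{1/2})^{1/2}\bigr)$ is obtained. The paper first invokes the Gelbrich-type lower bound from \cite{RachevRuschendorfBook1998} as a black box, and then, for the optimality step, again appeals to a ``known optimal solution'' $\bm{C}_{\mathrm{opt}}$ of the constrained trace problem without deriving it. Your route is more self-contained: the substitution $\bm{E}=\bm{P}^{-1/2}\bm{C}\bm{P}_{0}^{-1/2}$ together with von Neumann's trace inequality gives the bound directly and makes the equality case transparent (it forces $\bm{E}$ to be the orthogonal factor in the polar decomposition of $\bm{P}_{0}^{1/2}\bm{P}^{1/2}$). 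The paper's approach is shorter on the page because it cites; yours actually proves the key inequality and thereby explains \emph{why} the affine map with this particular $\bm{M}$ is the right one.
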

\begin{proof}
Let $\varrho_{0}$ be as given, and choose any $\varrho\in\cD_{\bm{\mu},\bm{P}}$. Let $\overline{\varrho}_{0}$ and $\overline{\varrho}$ be obtained by translating $\varrho_{0}$ and $\varrho$ respectively, such that both $\overline{\varrho}_{0}$ and $\overline{\varrho}$ have zero mean. Using (\ref{WassDefn}), we can directly verify \cite[p. 236]{GivensShortt1984} that $W_{2}^{2}\left(\varrho,\varrho_{0}\right) = \parallel\bm{\mu}-\bm{\mu}_{0}\parallel_{2}^{2} + W_{2}^{2}\left(\overline{\varrho},\overline{\varrho}_{0}\right)$. On the other hand, it is known \cite[p. 11, Proposition 1.1.6]{RachevRuschendorfBook1998} that 
\begin{align}
&W_{2}^{2}\left(\overline{\varrho},\overline{\varrho}_{0}\right) \geq \tr\left(\bm{P} + \bm{P}_{0} - 2 \left(\bm{P}_{0}^{\frac{1}{2}} \bm{P} \bm{P}_{0}^{\frac{1}{2}}\right)^{\frac{1}{2}}\right) \nonumber\\
&\Rightarrow W_{2}^{2}\left(\varrho,\varrho_{0}\right) \geq \;\text{ right hand side of (\ref{infWsquare})}.
\label{StillAnInequality}	
\end{align}
Now consider a candidate transport map $\bm{M}\bm{x} + \bm{m}$ where $\bm{M}$ and $\bm{m}$ are functions of $\bm{P},\bm{P}_{0},\bm{\mu},\bm{\mu}_{0}$ as in the statement. It suffices to prove that our candidate transport map indeed achieves the equality in (\ref{StillAnInequality}). To this end, directly substituting the expressions for $\bm{M}$ and $\bm{m}$, notice that the push-forward has mean $\bm{M}\bm{\mu}_{0} + \bm{m} = \bm{\mu}$, and covariance $\bm{M}\bm{P}_{0}\bm{M}^{\top} = \bm{P}$. So our candidate transport map $(\bm{M},\bm{m})$ is feasible. To show optimality, from (\ref{WassDefn}) notice that $W_{2}^{2}\left(\overline{\varrho},\overline{\varrho}_{0}\right) = \underset{\bm{C}\in\mathbb{R}^{d\times d}}{\inf}\:\tr(\bm{P}+\bm{P}_{0} - 2\bm{C})$, where $\bm{C}:=\bm{M}\bm{P}_{0}$ solves $\bm{P}_{0} - \bm{C}\bm{P}^{-1}\bm{C}^{\top} \succeq \bm{0}$, which has known optimal solution $\bm{C}_{\mathrm{opt}}:=\bm{M}_{\mathrm{opt}}\bm{P}_{0}=\bm{P}_{0}\bm{P}^{\frac{1}{2}} \left(\bm{P}^{\frac{1}{2}}\bm{P}_{0}\bm{P}^{\frac{1}{2}}\right)^{-\frac{1}{2}} \bm{P}^{\frac{1}{2}}$. Since our candidate $\bm{M}:=\bm{P}^{\frac{1}{2}} \left(\bm{P}^{\frac{1}{2}}\bm{P}_{0}\bm{P}^{\frac{1}{2}}\right)^{-\frac{1}{2}} \bm{P}^{\frac{1}{2}}$ satisfies $\tr\left(\bm{M}\bm{P}_{0}\right) = \tr\left(\bm{M}_{\mathrm{opt}}\bm{P}_{0}\right) = \tr\left(\left(\bm{P}_{0}^{\frac{1}{2}} \bm{P} \bm{P}_{0}^{\frac{1}{2}}\right)^{\frac{1}{2}}\right)$, the statement follows.
\end{proof}

%We will use Lemma \ref{WsqareProjLemma} in the next subsection for computing $\rho(\bm{x}(t),t)$ for a linear system via JKO scheme. 
In the Corollary below, combining Lemma \ref{TraceInequalityLemma} and \ref{WsqareProjLemma}, we recover a result in \cite[Theorem 3.1]{CarlenGangbo2003}. %{\textcolor{red}{How to politely phrase a sentence saying our proof is more direct than \cite{CarlenGangbo2003}?}

\begin{corollary}\label{RecoverCarlenGangboThm3.1}
	Given $d$-dimensional joint PDF $\varrho_{0}$ with mean $\bm{\mu}_{0}$, covariance $\bm{P}_{0} \succ \bm{0}$, suppose $\tr(\bm{P}_{0}) = \tau_{0}$. For fixed $\bm{\mu}$ and $\tau>0$,
	\begin{eqnarray}
	\underset{\varrho\in\cD_{\bm{\mu},\tau}}{\inf} \: W_{2}^{2}\left(\varrho,\varrho_{0}\right) = \left(\sqrt{\tau} - \sqrt{\tau}_{0}\right)^{2} \: + \: \parallel \bm{\mu} - \bm{\mu}_{0} \parallel_{2}^{2}, 
	\label{WassCarlenGangbo}	
	\end{eqnarray}
and is achieved by $\varrho(\bm{x}) = \left(\frac{\tau_{0}}{\tau}\right)^{\frac{d}{2}} \varrho_{0}\left(\frac{\tau_{0}}{\tau}\left(\bm{x} - \bm{\mu}\right) + \bm{\mu}_{0}\right)$.
\end{corollary}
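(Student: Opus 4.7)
The plan is to reduce the constrained optimization over $\cD_{\bm{\mu},\tau}$ to a two-level problem by exploiting the stratification $\cD_{\bm{\mu},\tau}=\bigcup_{\bm{P}\succ\bm{0},\:\tr(\bm{P})=\tau}\cD_{\bm{\mu},\bm{P}}$, and then invoke the two lemmas already proved. Concretely, I would first write
\begin{align*}
\underset{\varrho\in\cD_{\bm{\mu},\tau}}{\inf}\,W_{2}^{2}(\varrho,\varrho_{0})
\;=\;\underset{\bm{P}\succ\bm{0},\;\tr(\bm{P})=\tau}{\inf}\;\underset{\varrho\in\cD_{\bm{\mu},\bm{P}}}{\inf}\,W_{2}^{2}(\varrho,\varrho_{0}),
\end{align*}
and apply Lemma \ref{WsqareProjLemma} to eliminate the inner infimum, replacing it with the closed-form value $\|\bm{\mu}-\bm{\mu}_{0}\|_{2}^{2}+\tr\bigl(\bm{P}+\bm{P}_{0}-2(\bm{P}_{0}^{1/2}\bm{P}\bm{P}_{0}^{1/2})^{1/2}\bigr)$.

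Since $\tr(\bm{P})=\tau$ and $\tr(\bm{P}_{0})=\tau_{0}$ are fixed, the outer minimization collapses to maximizing $\tr\bigl((\bm{P}_{0}^{1/2}\bm{P}\bm{P}_{0}^{1/2})^{1/2}\bigr)$ over positive definite $\bm{P}$ with $\tr(\bm{P})=\tau$. Lemma \ref{TraceInequalityLemma} applied with $\bm{X}=\bm{P}_{0}$ and $\bm{Y}=\bm{P}$ immediately supplies the bound $\tr\bigl((\bm{P}_{0}^{1/2}\bm{P}\bm{P}_{0}^{1/2})^{1/2}\bigr)\leq\sqrt{\tau_{0}\tau}$, which substituted back yields the lower bound $(\sqrt{\tau}-\sqrt{\tau_{0}})^{2}+\|\bm{\mu}-\bm{\mu}_{0}\|_{2}^{2}$ on the right-hand side of \eqref{WassCarlenGangbo}.

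For the matching upper bound, I would certify attainment at the scalar multiple $\bm{P}^{\star}:=(\tau/\tau_{0})\bm{P}_{0}$, which is feasible because $\tr(\bm{P}^{\star})=\tau$ and saturates Lemma \ref{TraceInequalityLemma}: one checks that $(\bm{P}_{0}^{1/2}\bm{P}^{\star}\bm{P}_{0}^{1/2})^{1/2}=\sqrt{\tau/\tau_{0}}\,\bm{P}_{0}$, whose trace is exactly $\sqrt{\tau_{0}\tau}$. Substituting this optimal $\bm{P}^{\star}$ into the affine transport map of Lemma \ref{WsqareProjLemma} reduces $\bm{M}$ to the isotropic dilation $\sqrt{\tau/\tau_{0}}\,\bm{I}$, so that the optimal push-forward of $\varrho_{0}$ is obtained by the change of variables $\bm{y}=\sqrt{\tau/\tau_{0}}\,\bm{x}+(\bm{\mu}-\sqrt{\tau/\tau_{0}}\,\bm{\mu}_{0})$, whose density is precisely the one displayed in the statement of the corollary.

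There is no deep obstacle here; the whole content of the argument is concentrated in the equality case of Lemma \ref{TraceInequalityLemma}, which forces $\bm{P}$ to be proportional to $\bm{P}_{0}$ and thereby reduces the matrix-level problem to the elementary scalar identity $\tau+\tau_{0}-2\sqrt{\tau\tau_{0}}=(\sqrt{\tau}-\sqrt{\tau_{0}})^{2}$. The only mildly delicate point, worth a brief remark, is that the infimum over $\bm{P}\succ\bm{0}$ with $\tr(\bm{P})=\tau$ is indeed realized (rather than merely approached), which follows since $\bm{P}^{\star}\succ\bm{0}$ lies in the interior of the feasible set.
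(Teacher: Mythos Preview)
Your proposal is correct and follows essentially the same approach as the paper's own proof: both combine Lemma~\ref{WsqareProjLemma} (to reduce to a covariance-level problem) with Lemma~\ref{TraceInequalityLemma} (to bound the cross-term by $\sqrt{\tau\tau_{0}}$), and both identify $\bm{P}^{\star}=(\tau/\tau_{0})\bm{P}_{0}$ as the equality case before specializing the affine transport map. The only cosmetic difference is that you organize the argument as a nested two-level infimum, whereas the paper first exhibits the candidate $\bm{P}^{\star}$ for the upper bound and then invokes the inequality \eqref{StillAnInequality} directly for the lower bound; the logical content is identical.
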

\begin{proof}
Let us choose $\bm{P}:=\frac{\tau}{\tau_{0}}\bm{P}_{0}$, and from (\ref{infWsquare}) observe that $\underset{\xi\in\cD_{\bm{\mu},\bm{P}}}{\inf}\: W_{2}^{2}\left(\xi,\varrho_{0}\right) = \left(\sqrt{\tau} - \sqrt{\tau}_{0}\right)^{2} \: + \: \parallel \bm{\mu} - \bm{\mu}_{0} \parallel_{2}^{2}$.	On the other hand, for any $\varrho\in \cD_{\bm{\mu},\tau}$, we know from (\ref{StillAnInequality}) that 
\begin{eqnarray*}
W_{2}^{2}\left(\varrho,\varrho_{0}\right) \geq \: \tau + \tau_{0} - 2\:\tr\left(\bm{P}_{0}^{\frac{1}{2}} \bm{S} \bm{P}_{0}^{\frac{1}{2}}\right)^{\frac{1}{2}} \: + \: \parallel \bm{\mu} - \bm{\mu}_{0} \parallel_{2}^{2},
%\label{IntermediateInequality}
\end{eqnarray*}
where $\bm{S}$ is the covariance of $\varrho$. Using Lemma \ref{TraceInequalityLemma}, we get
\begin{eqnarray}
\tr\left(\bm{P}_{0}^{\frac{1}{2}} \bm{S} \bm{P}_{0}^{\frac{1}{2}}\right)^{\frac{1}{2}} \leq \sqrt{\tau \tau_{0}} \Rightarrow W_{2}^{2}\left(\varrho,\varrho_{0}\right) \geq \nonumber\\
\underset{\xi\in\cD_{\bm{\mu},\bm{P}}}{\inf}\: W_{2}^{2}\left(\xi,\varrho_{0}\right) = \left(\sqrt{\tau} - \sqrt{\tau}_{0}\right)^{2} \: + \: \parallel \bm{\mu} - \bm{\mu}_{0} \parallel_{2}^{2},
\label{CorollaryFinal}	
\end{eqnarray}
and that the equality is achieved when $\bm{S} = \bm{P} = \frac{\tau}{\tau_{0}}\bm{P}_{0}$. In that case, $\det(\bm{S})=\left(\frac{\tau}{\tau_{0}}\right)^{d}\det(\bm{P}_{0})$, and hence Lemma \ref{WsqareProjLemma} yields the arg inf $\varrho(\bm{x})$ for (\ref{WassCarlenGangbo}) as
 \begin{align*}
&\sqrt{\frac{\det(\bm{P}_{0})}{\det(\bm{S})}}\:\varrho_{0}\left(\bm{S}^{-\frac{1}{2}} \left(\bm{S}^{\frac{1}{2}}\bm{P}_{0}\bm{S}^{\frac{1}{2}}\right)^{\frac{1}{2}} \bm{S}^{-\frac{1}{2}}\left(\bm{x} - \bm{\mu}\right) + \bm{\mu}_{0}\right) \\
&= \left(\frac{\tau_{0}}{\tau}\right)^{\frac{d}{2}} \varrho_{0}\left(\frac{\tau_{0}}{\tau}\left(\bm{x} - \bm{\mu}\right) + \bm{\mu}_{0}\right).
 \end{align*}
\vspace*{-0.1in} 
\end{proof}

\begin{lemma}\label{infFreeEnergyLemma}
If $\mathcal{E}(\cdot)$ depends on $\varrho$ only via the mean and covariance of $\varrho$, then $\underset{\varrho\in\cD_{\bm{\mu},\bm{P}}}{\inf} \; \mathcal{F}\left(\varrho\right)$ is achieved by $\mathcal{N}\left(\bm{\mu},\bm{P}\right)$.	
\end{lemma}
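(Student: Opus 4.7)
The plan is to exploit the decomposition $\mathcal{F}(\varrho)=\mathcal{E}(\varrho)+\beta^{-1}\mathcal{S}(\varrho)$ and reduce the problem to the classical maximum-entropy characterization of the Gaussian. Since by assumption $\mathcal{E}(\varrho)$ depends on $\varrho$ only through the mean and covariance, the functional $\mathcal{E}$ is constant on $\cD_{\bm{\mu},\bm{P}}$. Hence minimizing $\mathcal{F}$ over $\cD_{\bm{\mu},\bm{P}}$ is equivalent to minimizing $\mathcal{S}(\varrho)=\int \varrho \log \varrho \, \mathrm{d}\bbx$, i.e., maximizing the differential entropy, over the same set.

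The cleanest route is then a relative-entropy argument against the Gaussian candidate. For any $\varrho\in\cD_{\bm{\mu},\bm{P}}$ I would write
\[
D_{\mathrm{KL}}\!\left(\varrho\,\|\,\mathcal{N}(\bm{\mu},\bm{P})\right)=\mathcal{S}(\varrho)-\int \varrho(\bbx)\log \mathcal{N}(\bm{\mu},\bm{P})(\bbx)\,\mathrm{d}\bbx.
\]
The second term, using $\log \mathcal{N}(\bm{\mu},\bm{P})(\bbx)=-\tfrac{1}{2}(\bbx-\bm{\mu})^{\top}\bm{P}^{-1}(\bbx-\bm{\mu})-\tfrac{1}{2}\log((2\pi)^n\det\bm{P})$, evaluates to $-\tfrac{1}{2}\tr(\bm{P}^{-1}\bm{P})-\tfrac{1}{2}\log((2\pi)^n\det\bm{P})=-\tfrac{n}{2}-\tfrac{1}{2}\log((2\pi)^n\det\bm{P})$, because $\varrho$ has mean $\bm{\mu}$ and covariance $\bm{P}$. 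Crucially, this quantity is the same for every $\varrho\in\cD_{\bm{\mu},\bm{P}}$, so
\[
\mathcal{S}(\varrho)=D_{\mathrm{KL}}\!\left(\varrho\,\|\,\mathcal{N}(\bm{\mu},\bm{P})\right)+\text{const.}
\]

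Gibbs' inequality (nonnegativity of $D_{\mathrm{KL}}$, with equality iff the two densities agree a.e.) then gives $\mathcal{S}(\varrho)\ge \mathcal{S}(\mathcal{N}(\bm{\mu},\bm{P}))$, with equality exactly at the Gaussian. Combined with the constancy of $\mathcal{E}$ on $\cD_{\bm{\mu},\bm{P}}$, this yields $\mathcal{F}(\varrho)\ge \mathcal{F}(\mathcal{N}(\bm{\mu},\bm{P}))$ with equality only at $\mathcal{N}(\bm{\mu},\bm{P})$, proving the claim.

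I do not anticipate a genuine obstacle here; the argument is entirely standard once one observes that the mean and covariance constraints render the cross-entropy $-\int\varrho\log\mathcal{N}(\bm{\mu},\bm{P})$ a constant across the feasible set. The only mild care needed is to ensure that $\mathcal{S}(\varrho)$ is well defined (or $+\infty$) for the $\varrho$ under consideration, so that the $D_{\mathrm{KL}}$ identity holds in the extended-real sense; this is implicit in requiring $\mathcal{F}(\varrho)<\infty$ for the JKO recursion to be meaningful.
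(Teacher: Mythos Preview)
Your proof is correct and follows essentially the same approach as the paper: observe that $\mathcal{E}$ is constant on $\cD_{\bm{\mu},\bm{P}}$, so the problem reduces to minimizing $\mathcal{S}(\varrho)$, which is achieved by $\mathcal{N}(\bm{\mu},\bm{P})$ as the maximum-entropy distribution with prescribed mean and covariance. The only difference is that the paper simply invokes the maximum-entropy characterization of the Gaussian as known, whereas you supply a self-contained proof of it via the identity $\mathcal{S}(\varrho)=D_{\mathrm{KL}}(\varrho\|\mathcal{N}(\bm{\mu},\bm{P}))+\text{const.}$ and Gibbs' inequality; this also gives you uniqueness of the minimizer for free.
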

\begin{proof}
As $\mathcal{E}(\varrho)\equiv\mathcal{E}(\bm{\mu},\bm{P})$, hence from (\ref{FreeEnergyDefn}) we get $\underset{\varrho\in\cD_{\bm{\mu},\bm{P}}}{\inf} \mathcal{F}\left(\varrho\right) = \mathcal{E}(\bm{\mu},\bm{P}) + \beta^{-1}\underset{\varrho\in\cD_{\bm{\mu},\bm{P}}}{\inf} \int \varrho\log\varrho\:\mathrm{d}\bm{x}$. Since $\mathcal{N}(\bm{\mu},\bm{P})$ is the maximum entropy PDF under prescribed mean $\bm{\mu}$ and covariance $\bm{P}$, hence the statement.
\end{proof}

\begin{lemma}\label{MatrixDerivativeofWassCrossTermLemma}
For $\bm{P},\bm{P}_{0}\succ 0$,
\begin{eqnarray}\nonumber
\displaystyle\frac{\partial}{\partial\bm{P}}\:\tr\left(\bm{P}_{0}^{\frac{1}{2}} \bm{P} \bm{P}_{0}^{\frac{1}{2}}\right)^{\frac{1}{2}} = \displaystyle\frac{1}{2}\bm{P}_{0}^{\frac{1}{2}}\left(\bm{P}_{0}^{-\frac{1}{2}} \bm{P}^{-1} \bm{P}_{0}^{-\frac{1}{2}}\right)^{\frac{1}{2}}\bm{P}_{0}^{\frac{1}{2}}.
\label{MatrixDerivativeofWassCrossTerm}	
\end{eqnarray}
\end{lemma}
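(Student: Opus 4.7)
The plan is to establish the identity in two stages: first prove the scalar-variable workhorse $\frac{\partial}{\partial \bm{M}} \tr(\bm{M}^{1/2}) = \tfrac{1}{2}\bm{M}^{-1/2}$ for $\bm{M}\succ \bm{0}$, and then use the chain rule on the composition $\bm{P} \mapsto \bm{M}(\bm{P}) := \bm{P}_0^{1/2}\bm{P}\bm{P}_0^{1/2}$, simplifying via $\bm{M}^{-1} = \bm{P}_0^{-1/2}\bm{P}^{-1}\bm{P}_0^{-1/2}$.

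For the first stage, I would use the spectral representation $\tr(\bm{M}^{1/2}) = \sum_i \sqrt{\lambda_i(\bm{M})}$ together with the standard eigenvalue perturbation identity $\mathrm{d}\lambda_i = \bm{u}_i^\top (\mathrm{d}\bm{M})\bm{u}_i$, which gives $\tfrac{\partial \lambda_i}{\partial \bm{M}} = \bm{u}_i\bm{u}_i^\top$ for any simple eigenvalue. Summing,
\begin{equation*}
\frac{\partial}{\partial\bm{M}}\tr(\bm{M}^{1/2})=\frac12\sum_i \frac{1}{\sqrt{\lambda_i}}\bm{u}_i\bm{u}_i^\top=\frac12\bm{M}^{-1/2},
\end{equation*}
and the generic-spectrum case extends to all $\bm{M}\succ \bm{0}$ by continuity. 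Equivalently, one can invoke the Daleckii--Krein differentiation formula for functions of a self-adjoint matrix, whose diagonal (in the eigenbasis of $\bm{M}$) contributes exactly $g'(\lambda_i)$ with $g(x)=\sqrt{x}$; the off-diagonal divided differences drop out upon taking the trace.

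For the second stage, since $\bm{P}_0$ is constant, $\mathrm{d}\bm{M}=\bm{P}_0^{1/2}\,\mathrm{d}\bm{P}\,\bm{P}_0^{1/2}$, so by cyclicity of the trace,
\begin{equation*}
\mathrm{d}\tr(\bm{M}^{1/2}) = \tr\!\Bigl(\tfrac12\bm{M}^{-1/2}\,\mathrm{d}\bm{M}\Bigr) = \tr\!\Bigl(\tfrac12\bm{P}_0^{1/2}\bm{M}^{-1/2}\bm{P}_0^{1/2}\,\mathrm{d}\bm{P}\Bigr),
\end{equation*}
which identifies $\tfrac{\partial}{\partial\bm{P}}\tr(\bm{M}^{1/2})=\tfrac12\bm{P}_0^{1/2}\bm{M}^{-1/2}\bm{P}_0^{1/2}$. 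Finally, $\bm{M}^{-1}=\bm{P}_0^{-1/2}\bm{P}^{-1}\bm{P}_0^{-1/2}$, so taking the principal square root yields $\bm{M}^{-1/2}=(\bm{P}_0^{-1/2}\bm{P}^{-1}\bm{P}_0^{-1/2})^{1/2}$, and the claimed formula follows.

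The main obstacle is really the first stage: justifying $\partial_{\bm{M}}\tr(\bm{M}^{1/2})=\tfrac12\bm{M}^{-1/2}$ cleanly, because $\bm{M}$ and $\mathrm{d}\bm{M}$ do not commute in general and the naive scalar calculus identity fails term-by-term. The spectral-decomposition argument (or, equivalently, Daleckii--Krein) resolves this: once the trace is taken, the non-commuting off-diagonal pieces cancel, and what survives is exactly the commutative-looking derivative. Everything after that is a bookkeeping computation that relies only on the symmetry of $\bm{P}_0^{1/2}$ and cyclicity of trace.
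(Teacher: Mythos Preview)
Your argument is correct. The two-stage approach---first establishing $\partial_{\bm{M}}\tr(\bm{M}^{1/2})=\tfrac12\bm{M}^{-1/2}$ via the spectral representation (or, equivalently, the Sylvester equation $\bm{N}\,\mathrm{d}\bm{N}+\mathrm{d}\bm{N}\,\bm{N}=\mathrm{d}\bm{M}$ for $\bm{N}=\bm{M}^{1/2}$, whose diagonal in the eigenbasis gives $(\mathrm{d}\bm{N})_{ii}=(\mathrm{d}\bm{M})_{ii}/2\mu_i$), and then pushing through the affine change of variable $\bm{M}=\bm{P}_0^{1/2}\bm{P}\bm{P}_0^{1/2}$ with trace cyclicity---is clean and complete. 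Your closing identification $\bm{M}^{-1/2}=(\bm{P}_0^{-1/2}\bm{P}^{-1}\bm{P}_0^{-1/2})^{1/2}$ is legitimate because both sides are principal square roots of the same positive-definite matrix.

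As for comparison with the paper: the paper does not actually supply a proof of this lemma---it simply cites Appendix~B of \cite{HalderWendelACC2016}. So your write-up is strictly more self-contained than what appears here. If anything, your remark that the off-diagonal Daleckii--Krein terms vanish upon tracing is exactly the insight that makes the ``naive'' scalar-calculus answer survive, and it would be a useful sentence to retain if this were to replace the citation.
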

\begin{proof}
We refer the readers to Appendix B in \cite{HalderWendelACC2016}.	
\end{proof}

%%%%%%%%%%%%%%%%%%%%%%%%%%%%%%%%%%%%%%%%%%%%%%%%%%%%%%%%%%%%%%%%%%%%%%%%%%%%%%%%%%%%%%%%%%%%%%%%%%%%%%%%%%%%%%%%%%%%%%%%%%%%%%%%%%%%%%%


\begin{thebibliography}{99}%


\bibitem{BauschkeCombettes2011}
H.H.~Bauschke, and P.L.~Combettes, \emph{Convex Analysis and Monotone Operator Theory in Hilbert Spaces}. CMS Books in Mathematics, Springer; 2011.


\bibitem{ParikhBoyd2013}
N.~Parikh, and S.~Boyd, ``Proximal Algorithms". \emph{Foundations and Trends in Optimization}. Vol. 1, No. 3, pp. 127--239, 2014.


%%%%%%%%%%%%%%%%%%%%%%%%%%%%%%%%%%%%%%%%%%%%%%%%%%%%%%%%%%%%%%%%%%%%%%%%%%%%%%%%%%%%%%%


\bibitem{JKO1998}
R.~Jordan, D.~Kinderlehrer, and F.~Otto, ``The Variational Formulation of the Fokker--Planck Equation". \emph{SIAM Journal on Mathematical Analysis}. Vol. 29, No. 1, pp. 1--17, 1998.

%%%%%%%%%%%%%%%%%%%%%%%%%%%%%%%%%%%%%%%%%%%%%%%%%%%%%%%%%%%%%%%%%%%%%%%%%%%%%%%%%%%%%%%

\bibitem{VillaniBook2003}
C.~Villani, \emph{Topics in Optimal Transportation}. Graduate Studies in
Mathematics, Vol. 58, First ed., American Mathematical Society; 2003.

\bibitem{French}
J.-D.~ Benamou, and Y.~Brenier,
``A Computational Fluid Mechanics Solution to the
Monge--Kantorovich Mass Transfer Problem''. \emph{Numerische Mathematik}. Vol. 84, No. 3, pp. 375--393, 2000.
\balance

%%%%%%%%%%%%%%%%%%%%%%%%%%%%%%%%%%%%%%%%%%%%%%%%%%%%%%%%%%%%%%%%%%%%%%%%%%%%%%%%%%%%%%%

\bibitem{AstromBook1970}
K.J.~\AA str\"om, \emph{Introduction to Stochastic Control Theory}. Academic Press; 1970.



%%%%%%%%%%%%%%%%%%%%%%%%%%%%%%%%%%%%%%%%%%%%%%%%%%%%%%%%%%%%%%%%%%%%%%%%%%%%%%%%%%%%%%%

\bibitem{RiskenBook1989}
H.~Risken, \emph{The Fokker-Planck Equation: Methods of Solution and Applications}. Springer Series in Synergetics, Vol. 18, First ed., Springer; 1989.

%\bibitem{SoizeBook1994}
%C.~Soize, \emph{The Fokker-Planck Equation for Stochastic Dynamical Systems and Its Explicit Steady State Solutions}. Series on Advances in Mathematics for Applied Sciences, Vol. 17, First ed., World Scientific; 1994.


%%%%%%%%%%%%%%%%%%%%%%%%%%%%%%%%%%%%%%%%%%%%%%%%%%%%%%%%%%%%%%%%%%%%%%%%%%%%%%%%%%%%%%%

\bibitem{AmbrosioBook2008}
L.~Ambrosio, N.~Gigli, and G.~Savar{\'e}, \emph{Gradient Flows: in Metric Spaces and in the Space of Probability Measures}. Lectures in Mathematics, ETH Z\"{u}rich, Second ed., Birkh\"{a}user; 2008.

\bibitem{LaugesenMehta2015}
R.S.~Laugesen, P.G.~Mehta, S.P.~Meyn, and M.~Raginsky, ``Poisson's Equation in Nonlinear Filtering". \emph{SIAM Journal on Control and Optimization}. Vol. 53, No. 1, pp. 501--525, 2015.




%%%%%%%%%%%%%%%%%%%%%%%%%%%%%%%%%%%%%%%%%%%%%%%%%%%%%%%%%%%%%%%%%%%%%%%%%%%%%%%%%%%%%%%

\bibitem{Uhlmann1976}
A.~Uhlmann, ``The ``Transition Probability" in the State Space of a *-Algebra". \emph{Reports on Mathematical Physics}, Vol. 9, No. 2, pp. 273--279, 1976.

\bibitem{PetzBook2008}
D.~Petz, \emph{Quantum Information Theory and Quantum Statistics}. Theoretical and Mathematical Physics, First ed., Springer; 2008.

%%%%%%%%%%%%%%%%%%%%%%%%%%%%%%%%%%%%%%%%%%%%%%%%%%%%%%%%%%%%%%%%%%%%%%%%%%%%%%%

\bibitem{GivensShortt1984}
C.R.~Givens, and R.M.~Shortt, ``A Class of Wasserstein Metrics for Probability Distributions". \emph{The Michigan Mathematical Journal}, Vol. 31, No. 2, pp. 231--240, 1984.


\bibitem{RachevRuschendorfBook1998}
S.T.~Rachev, and L.~R{\"u}schendorf, \emph{Mass Transportation Problems. Volume I: Theory}. First ed., Springer; 1998.

%%%%%%%%%%%%%%%%%%%%%%%%%%%%%%%%%%%%%%%%%%%%%%%%%%%%%%%%%%%%%%%%%%%%%%%%%%%%%%%

\bibitem{CarlenGangbo2003}
E.A.~Carlen, and W.~Gangbo, ``Constrained Steepest Descent in the 2-Wasserstein Metric". \emph{Annals of Mathematics}, pp. 807--846, 2003.
%\newpage

%%%%%%%%%%%%%%%%%%%%%%%%%%%%%%%%%%%%%%%%%%%%%%%%%%%%%%%%%%%%%%%%%%%%%%%%%%%%%%%%%%%%%%%

\bibitem{HalderWendelACC2016}
A.~Halder, and E.D.B.~Wendel, ``Finite Horizon Linear Quadratic Gaussian Density Regulator with Wasserstein Terminal Cost". \emph{Proceedings of the 2016 American Control Conference}, pp. 7249--7254, 2016.



%%%%%%%%%%%%%%%%%%%%%%%%%%%%%%%%%%%%%%%%%%%%%%%%%%%%%%%%%%%%%%%%%%%%%%%%%%%%%%%%%%%%%%%

\bibitem{HighamKimSIAM2001}
N.J.~Higham, and H.M.~Kim, ``Solving A Quadratic Matrix Equation by Newton's Method with Exact Line Searches". \emph{SIAM Journal on Matrix Analysis and Applications}, Vol. 23, No. 2, pp. 303--316, 2001.

%%%%%%%%%%%%%%%%%%%%%%%%%%%%%%%%%%%%%%%%%%%%%%%%%%%%%%%%%%%%%%%%%%%%%%%%%%%%%%%%%%%%%%%

\bibitem{LiberzonBrockettSIAM2000}
D.~Liberzon, and R.W.~Brockett, ``Spectral Analysis of Fokker--Planck and Related Operators Arising from Linear Stochastic Differential Equations". \emph{SIAM Journal on Control and Optimization}, Vol. 38, No. 5, pp. 1453--1467, 2000.

\bibitem{BrockettWillemsCDC1978}
R.W.~Brockett, and J.C.~Willems, ``Stochastic Control and the Second Law of Thermodynamics". \emph{Proceedings of the 1978 IEEE Conference on Decision and Control including the 17th Symposium on Adaptive Processes}, pp. 1007--1011, 1978.

%\bibitem{Brockett1997notes}
%\blue{\st{R.W.~Brockett, ``Notes on Stochastic Processes on Manifolds". \emph{Systems and Control in the Twenty-first Century}, pp. 75--100, Springer; 1997.}}

%%%%%%%%%%%%%%%%%%%%%%%%%%%%%%%%%%%%%%%%%%%%%%%%%%%%%%%%%%%%%%%%%%%%%%%%%%%%%%%%%%%%%%%

\bibitem{Stratonovich1960}
R.L.~Stratonovich, ``Application of the Theory of Markov Processes for Optimum Filtration of Signals". \emph{Radio Eng. Electron. Phys. (USSR)}, Vol. 1, pp. 1--19, 1960.

\bibitem{Kushner1964}
H.J.~Kushner, ``On the Differential Equations Satisfied by Conditional Densities of Markov Processes, with Applications". \emph{Journal of the SIAM Series A Control}, Vol 2, No. 1, pp. 106--119, 1964. 

%\bibitem{FrostKaliath1971PartIII}
%P.~Frost, and T.~Kaliath, ``An Innovations Approach to Least-squares Estimation--Part III: Nonlinear Estimation in White Gaussian Noise". \emph{IEEE Transactions on Automatic Control}, Vol. 16, No. 3, pp. 217--226, 1971.

\bibitem{FKK1972}
M.~Fujisaki, G.~Kallianpur, and H.~Kunita, ``Stochastic Differential Equations for the Non Linear Filtering Problem". \emph{Osaka Journal of Mathematics}, Vol. 9, No. 1, pp. 19--40, 1972.

\bibitem{KalmanBucy1961}
R.E.~Kalman, and R.S.~Bucy, ``New Results in Linear Filtering and Prediction Theory". \emph{Journal of Basic Engineering}, Vol. 83, No. 3, pp. 95--108, 1961. 



%%%%%%%%%%%%%%%%%%%%%%%%%%%%%%%%%%%%%%%%%%%%%%%%%%%%%%%%%%%%%%%%%%%%%%%%%%%%%%%%%%%%%%%

\bibitem{YezziVerriest2007}
A.~Yezzi, and E.I.~Verriest, ``Nonlinear Observers via Regularized Dynamic Inversion". \emph{Proceedings of the 2007 American Control Conference}, pp. 1693--1698, 2007.


\end{thebibliography}
\end{document}